\newtheorem{theorem}{Theorem}[section]
\newtheorem{lemma}[theorem]{Lemma}
\newtheorem{proposition}[theorem]{Proposition}
\theoremstyle{definition}
\newtheorem{remark}[theorem]{Remark}
\numberwithin{equation}{section}
\newcommand{\BC}{{\mathbb C}}
\newcommand{\BF}{{\mathbb F}}
\newcommand{\BH}{{\mathbb H}}
\newcommand{\BL}{{\mathbb L}}
\newcommand{\BR}{{\mathbb R}}
\newcommand{\cC}{{\mathcal C}}
\newcommand{\cE}{{\mathcal E}}
\newcommand{\cH}{{\mathcal H}}
\newcommand{\cL}{{\mathcal L}}
\newcommand{\cP}{{\mathcal P}}
\newcommand{\cU}{{\mathcal U}}\newcommand{\cV}{{\mathcal V}}
\newcommand{\cW}{{\mathcal W}}
\newcommand{\cZ}{{\mathcal Z}}
\newcommand{\bC}{{\mathbf C}}
\newcommand{\bR}{{\mathbf R}}
\newcommand{\fA}{{\mathfrak A}}
\newcommand{\fY}{{\mathfrak Y}}
\newcommand{\whatC}{\widehat{C}}
\newcommand{\whatS}{\widehat{S}}
\newcommand{\la}{\lambda}
\newcommand{\rank}{\textup{rank\,}}
\newcommand{\ran}{\textup{ran\,}}
\newcommand{\kr}{\textup{Ker\,}}
\newcommand{\mat}[1]{\begin{bmatrix} #1 \end{bmatrix}}
\newcommand{\ov}[1]{{\overline{#1}}}
\newcommand{\tu}[1]{\textup{#1}}
\newcommand{\wtil}[1]{{\widetilde{#1}}}
\newcommand{\ands}{\quad\mbox{and}\quad}
\newcommand{\PRO}{\mathcal{PRO}}
\newcommand{\cic}{\textsf{cic}}
\newcommand{\vect}{\operatorname{vec}}
\newcommand{\imag}{\textup{i}}
\begin{document}
\title{
Convex invertible cones and Nevanlinna-Pick interpolation:\\ The suboptimal case
}

\thanks{This work is based on research supported in part by the National Research Foundation of South Africa (Grant Numbers 118513 and 127364).}

\author[S. ter Horst]{S. ter Horst}
\address{S. ter Horst, Department of Mathematics, Research Focus Area:\ Pure and Applied Analytics, North-West University, Potchefstroom, 2531 South Africa and DSI-NRF Centre of Excellence in Mathematical and Statistical Sciences (CoE-MaSS)}
\email{Sanne.TerHorst@nwu.ac.za}

\author[A. van der Merwe]{A. van der Merwe}
\address{A. van der Merwe, Faculty of Engineering and the Built Environment, Academic Development Unit, University of the Witwatersrand, Johannesburg, 2000 South Africa and DSI-NRF Centre of Excellence in Mathematical and Statistical Sciences (CoE-MaSS)}
\email{alma.vandermerwe@wits.ac.za}

\begin{abstract}                
Nevanlinna-Pick interpolation developed from a topic in classical complex analysis to a useful tool for solving various problems in control theory and electrical engineering. Over the years many extensions of the original problem were considered, including extensions to different function spaces, nonstationary problems, several variable settings and interpolation with matrix and operator points. Here we discuss a variation on Nevanlinna-Pick interpolation for positive real odd functions evaluated in real matrix points. This problem was studied by Cohen and Lewkowicz using convex invertible cones and the Lyapunov order, but was never fully resolved. In this paper we present a solution to this problem in a special case that we refer to as `suboptimal' based on connections with the classical case. The solution requires a representation of linear matrix maps going back to R.D. Hill and an analysis of when positive linear matrix maps are completely positive, on which we reported in earlier work and which we will briefly review here.
\end{abstract}

\keywords{Positive real odd functions, convex invertible cones, Nevanlinna-Pick interpolation, Lyapunov order, positive and completely positive matrix maps.}

\subjclass[2010]{Primary 93D30, 46L07, 47L07; Secondary 15A04, 15A39, 15B48, 47A57, 30E05, 15B05, 93D05}

\maketitle
\begin{multicols}{2}
\section{Introduction}

Nevanlinna-Pick interpolation refers to a metric constrained interpolation problem in complex analysis that was solved by \cite{N29} and \cite{P16} in the early 20th century. The fact that it remains an active field of study is largely due to its importance in various applications in control theory and electrical engineering; cf., \cite{DGK81} for an account of some early applications and \cite{MG18,BOS17,WYO14,BtH10b} for a few more recent developments. As a specific classical application we mention the early work of \cite{F24,C26,B31} in which the impedances of lumped electrical one-port circuits generated by inductances and capacitors ware shown to correspond to the class of positive real odd functions; in electrical engineering ``odd'' is usually indicated by ``lossless,'' however, we shall follow here the terminology used by \cite{CL07}. This connection between electrical networks and complex functions motivated \cite{YS67} to investigate Nevanlinna-Pick interpolation in this class of functions. In particular, they developed more explicit formulas for the solutions that enables one to construct such electrical circuits with certain constrains imposed by the interpolation conditions.

Later applications, for instance in multi-port circuits, multidimensional systems, nonlinear and non-stationary systems, required more advanced variations on Nevanlinna-Pick interpolation, e.g., for matrix or operator functions, other function classes and more intricate domains, multivariable extensions, nonlinear and non-stationary versions, evaluation in matrix and operator points, and led to a renewed interest for the problem in the past few decades.\\

\noindent{\em Interpolation in matrix points.}
The classical Nevanlinna-Pick interpolation problem asks for a function from a given function class that takes prescribed values at certain points in a given domain, together with a metric constraint. Evaluating a function in a matrix point, say in a Jordan block, also requires evaluation of the derivatives of the function at the eigenvalue of the Jordan block up to the size of the block minus one. Therefore, interpolation conditions in matrix points enables one to combine classical interpolation conditions with conditions where the values of the derivatives up to a certain order are prescribed, which can all be encoded in the Jordan structure of a single pair of matrices. It is this kind of Nevanlinna-Pick interpolation that was studied by \cite{CL09}, as we will explain in the next section. \\

{\em Summary of the paper.} The paper is organized as follows. In Section \ref{S:CLapproach} we formulate the interpolation problem considered by Cohen and Lewkowicz as well as various aspects of their approach, like convex invertible cones and the Lyapunov order. Section \ref{S:Main} contains our main result as well as various definitions required to state the result. The result consists of four equivalent statements. One implication comes from the work of Cohen and Lewkowicz and will be explained in Section \ref{S:CLapproach}. Two other implications have appeared in other work of the authors which will be shortly reviewed in Sections \ref{S:HillRep} and \ref{S:P=CP}, in which also some important results and observations for the proof of the remaining implication will be made. This last implication will be proved in Section \ref{S:Proof}.

\section{The Cohen-Lewkowicz cic approach to Nevanlinna-Pick interpolation}\label{S:CLapproach}

N. Cohen and I. Lewkowicz (in \cite{CL07,CL09}) considered a variation on Nevanlinna-Pick interpolation for real rational functions within the class of {\em positive real odd functions}, denoted $\PRO$; recall that functions $f$ in $\PRO$ are real rational functions that take nonnegative values on the positive real line and satisfy $f(-\ov{z})^*=-f(z)$ for $z\in\BC$ not poles of $f$ (so that they are odd functions on the real line). For two matrices $A,B\in\BR^{n \times n}$ the question is when there exists a $f\in\PRO$ so that $f(A)=B$.\\

{\em Convex invertible cones.}
The underlying concept in the approach to Nevanlinna-Pick interpolation by Cohen and Lewkowicz is that of a convex invertible cone, abbreviated here to \cic. In a unital algebra $\fA$, a \cic\ is a convex cone that is closed under inversion.  For a set $X$ in $\fA$, the \cic\  generated by $X$ is the smallest \cic\ that contains $X$ and is denoted as $\cC(X)$; we write $\cC(a)$ instead of $\cC(\{a\})$ in case the set is a singleton.

For instance, $\PRO$ forms a \cic\ within the algebra of real rational functions. In fact, it is a \cic\ that is singly generated by the function $f_{\infty}(z)=z$; so $\PRO=\cC(f_\infty)$. This fact immediately leads to an important insight for the Nevanlinna-Pick interpolation problem:\ For $A\in\BR^{n\times n}$ with no poles on the imaginary axis $\imag \BR$ (because $\PRO$-functions have poles and zeros only on $\imag \BR$), the set $\PRO(A):=\{f(A)\colon f\in\PRO\}$ is a \cic\ within $\BR^{n\times n}$ and we have
\[
\PRO(A)=\cC(f_\infty(A))=\cC(A).
\]
Therefore, we have
\[
B\in\PRO(A) \ \Longleftrightarrow \ B\in\cC(A).
\]
So to solve the interpolation problem, that is whether $B\in\PRO(A)$, we may thus also ask whether $B\in\cC(A)$. However, the \cic\ $\cC(A)$ is not easily described when $A$ is nonsingular. Note that the bicommutant $\{A\}''$ of $A$ is also a \cic\ that contains $A$, so that $\cC(A)\subset \{A\}''$, and hence $B$ must be in $\{A\}''$ whenever $B\in\PRO(A)$. See \cite{CL07} for more details on the \cic-structure of $\PRO$ and \cite{tHN21} for the case of matrix $\PRO$ functions.

A further constraint imposed by the interpolation problem is caused by the function class $\PRO$. For $f\in\PRO$ and $z\in\BC$, if we know $f(z)$, then the fact that $f$ is real rational and odd directly specifies $f$ in $\ov{z}$, $-z$ and $-\ov{z}$. Hence, if we specify an interpolation condition at one of these four points, we should not specify a possibly conflicting condition at one of the other three. Therefore, we assume $A$ to be {\em Lyapunov regular}, that is, for the eigenvalues $\la_1,\ldots,\la_n$ of $A$ we demand
\[
\la_i+\ov{\la}_j \ne 0,\qquad i,j=1, \ldots,n.
\]
Note that this implies $A$ cannot have eigenvalues on the imaginary axis $\imag \BR$, in particular, $A$ is nonsingular.

While the Nevanlinna-Pick problem is stated specifically for the case of real matrices, much of what we need to solve the problem works over the complex numbers as well, and in the remainder we will only make the distinction when necessary. So, throughout this paper we use $\BF$ to indicate $\BC$ or $\BR$. To avoid confusion we shall use notation and terminology as if $\BF=\BC$. \\

{\em The Lyapunov order.}
We write $\cH_n$ for the set of $n \times n$ Hermitian matrices, $\cP_n$ for the set of all positive definite matrices in $\cH_n$ and $\ov{\cP}_n$ for the set of all positive semidefinite matrices in $\cH_n$. Given $A\in\BF^{n\times n}$, we call $H\in\cH_n$ a solution to the Lyapunov inequality of $A$ (Lyapunov solution of $A$ for short) if
\begin{align*}
&HA + A^*H \in \cP_n\ \ (\text{strict}) \ \  \text{or} \\  &HA + A^*H \in \ov{\cP}_n\ \ (\text{non-strict}).
\end{align*}
The Lyapunov solution sets of $A$ are thus given by as
\begin{align*}
\cH(A)&=\{H \in \cH_n: HA + A^*H \in \cP_n\},\\
\ov{\cH}(A)&=\{H \in \cH_n: HA + A^*H \in \ov{\cP}_n\}.
\end{align*}
It is easy to see that $\cH(A)$ and $\ov{\cH}(A)$ are also matrix \cic s.
\cite{CL09} introduced the {\em Lyapunov order} (for $\BF=\BR$), namely, for matrices $A,B \in \BF^{n \times n}$, it is said that $B$ {\em Lyapunov dominates} $A$, denoted $A \le_{\cL} B$, if
\begin{equation}\label{LyapOrder}
\ov{\cH}(A)\subset \ov{\cH}(B).
\end{equation}
Hence the Lyapunov order $A \le_\cL B$ means that all non-strict Lyapunov solutions of $A$ are also non-strict Lyapunov solutions of $B$. If $A$ and $B$ are both Lyapunov regular, \eqref{LyapOrder} is equivalent to $\cH(A)\subset \cH(B)$. Furthermore, the set
\[
\mathcal{C}_\cL(A)=\{B \in \BF^{n \times n}: A \le_\cL B\}
\]
is also a \cic\ and it clearly contains $A$. Since the intersection of \cic s is again a \cic, we have the inclusion
\[
\mathcal{C}(A) \subseteq \mathcal{C}_\cL(A)\bigcap \{A\}''_{\BF}.
\]
The subscript $\BF$ is added to the bicommutant of $A$ since if $A$ is real, one can consider the commutant both in $\BC^{n \times n}$ and $\BR^{n \times n}$. In \cite{CL09} Cohen and Lewkowicz conjectured that these two sets coincide (for $\BF=\BR$), at least when $A$ is Lyapunov regular. To test whether $A \le_\cL B$ when $A$ is Lyapunov regular (and $\BF=\BR$), Cohen and Lewkowicz formulated the weak Pick test as verifying that all extremal elements of $\ov{\cH}(A)$ are also in $\ov{\cH}(B)$, while if in addition $B\in\{A\}''_{\BF}$, they formulate the strong Pick test in which one only has to verify inclusion of a single extremal element of $\ov{\cH}(A)$ \cite[p.\ 1852]{CL09}.

In \cite{tHvdM22b}, for $\BF=\BR,\BC$, we showed that when $A$ is Lyapunov regular and  $B\in\{A\}''_{\BF}$, there exists a Hermitian matrix $\BH_{A,B}$, that can be explicitly computed, so that $A \le_\cL B$ corresponds to $\BH_{A,B}$ being positive semidefinite. The matrix $\BH_{A,B}$, called the Hill-Pick matrix of the pair $(A,B)$, is of size $m_\tu{max} \times m_\tu{max}$ with $m_\tu{max}=\dim \{A\}_\BF''$, where $\{A\}_\BF''$ is viewed as a linear subspace of $\BF^{n \times n}$. More concretely, $m_\tu{max}$ corresponds to the sum of the sizes of the Jordan blocks in the Jordan decomposition of $A$ where in case there are multiple Jordan blocks with the same eigenvalue, only the largest one is taken into account, and the others are discarded. See Section \ref{S:HillRep} below for more details.

\section{Main result}\label{S:Main}

\cite{vdMthesis} showed that the conjecture of Cohen and Lewkowicz is true (for $\BF=\BR$) in case $B$ does not simultaneously  has $1$ and $-1$ as an eigenvalue. In the present paper we provide a proof under a different condition, that we call ``the suboptimal case,'' which corresponds to the case where the Hill-Pick matrix is invertible:\ $\rank \BH_{A,B}=m_\tu{max}=\dim \{A\}_\BF''$. The proof in this case  is more direct and also gives a procedure to construct a solution.

To state our main result we need to reformulate the Lyapunov order in terms of linear matrix maps. Given a square matrix $Y \in \BF^{n \times n}$, we define the {\em Lyapunov operator} associated with $Y$ by
\begin{equation*}\label{LyapOp}
\cL_Y: \BF^{n \times n} \to \BF^{n \times n}, \quad \cL_Y(X)=XY + Y^*X,
\end{equation*}
with $X \in \BF^{n \times n}$. The Lyapunov operator associated with $Y$ is a linear matrix map which is bijective precisely when $Y$ is Lyapunov regular \cite[Corollary 4.4.7]{HJ91}. Note that $\ov{\cH}(Y)=\cL_Y^{-1}(\ov{\cP}_n)$.

When $A\in\BF^{n \times n}$ is Lyapunov regular, it then follows that $B\in \BF^{n \times n}$ Lyapunov dominates $A$ whenever the linear map
\begin{equation}\label{cL_AB}
\cL_{A,B}=\cL_B \circ \cL_A^{-1}: \BF^{n \times n}\to \BF^{n \times n}
\end{equation}
maps $\ov{\cP}_n$ into $\ov{\cP}_n$, that is, when $\cL_{A,B}$ is a positive linear matrix map. Verifying whether a linear matrix map is positive can be very challenging. Recall that $\cL_{A,B}$ is completely positive if for each positive integer $k$ the map $\cL_{A,B} \otimes I_k:\BF^{nk \times nk} \to \BF^{nk \times nk}$, with $\otimes$ the Kronecker tensor product and $I_k$ the $k \times k$ identity matrix, is positive and the complete positivity of $\cL_{A,B}$ can easily be verified by testing if the associated Choi matrix $\BL_{A,B}$ of $\cL_{A,B}$ is positive semidefinite; see Section \ref{S:HillRep} for the definition of the Choi matrix. It turns out that in case $A$ is Lyapunov regular and $B\in \{A\}_{\BF}''$, positivity of $\cL_{A,B}$ corresponds to complete positivity (see \cite{tHvdM22b}).  This turns out to be an important step in the proof of our main theorem.

\begin{theorem} \label{T:Main}
Let $A\in\BR^{n \times n}$ be Lyapunov regular and $B \in \{A\}''_{\BR}$. Let $\BH_{A,B}$ be the Hill-Pick matrix of the pair $(A,B)$ and assume that $\rank \BH_{A,B}=\dim\{A\}_\BR''$. Then the following statements are equivalent. \begin{itemize}
     \item[(i)] $f(A)=B$ for a function $f \in \mathcal{PRO},$
     \item[(ii)] $\cL_{A,B}$ is a positive linear map (i.e., $A \le_\cL B$),
     \item[(iii)] $\cL_{A,B}$ is a completely positive linear map,
     \item[(iv)] $\BH_{A,B}$ is a positive definite matrix.
 \end{itemize}
\end{theorem}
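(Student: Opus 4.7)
The plan is to prove the cycle (i)$\Rightarrow$(ii)$\Rightarrow$(iii)$\Rightarrow$(iv)$\Rightarrow$(i), using the fact that three of these four implications are already delivered by the surrounding sections. The implication (i)$\Rightarrow$(ii) is precisely the \cic-theoretic observation from Section \ref{S:CLapproach}: every $f\in\PRO$ lies in $\cC(f_\infty)$, hence $f(A)\in\cC(A)\subseteq\mathcal{C}_\cL(A)$, which is just the statement $A\le_\cL B$. The equivalence (ii)$\Leftrightarrow$(iii), under the standing assumption $B\in\{A\}''_\BR$, is the authors' earlier theorem from \cite{tHvdM22b} that will be recalled in Section \ref{S:P=CP}. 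The equivalence (iii)$\Leftrightarrow$(iv) will come out of the Hill representation in Section \ref{S:HillRep}: complete positivity of $\cL_{A,B}$ is equivalent to its Choi matrix $\BL_{A,B}$ being positive semidefinite, which in turn is equivalent to the Hill-Pick matrix $\BH_{A,B}$ being positive semidefinite, and under the rank hypothesis $\rank\BH_{A,B}=\dim\{A\}''_\BR$ this coincides with positive definiteness.

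The substantive work is the remaining implication (iv)$\Rightarrow$(i), to be carried out in Section \ref{S:Proof}. My approach would be constructive, exploiting the invertibility of $\BH_{A,B}$. Start from a Hermitian square-root factorization $\BH_{A,B}=N^*N$ with $N$ square and nonsingular. Because the Hill representation organises the data of $\cL_{A,B}$ and $\BH_{A,B}$ along the Jordan blocks of $A$, the columns of $N$ naturally furnish residue weights attached to the spectrum of $A$. Using this data I would write down a concrete function of Foster type,
\[
f(z)=d_0\, z+\frac{d_\infty}{z}+\sum_{k=1}^{K}\frac{2\sigma_k\, z}{z^2+\omega_k^2},
\]
with $d_0,d_\infty\ge 0$, $\sigma_k\ge 0$, and frequencies $\omega_k>0$ chosen compatibly with the spectral data of $A$. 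Each summand is itself a generator of the \cic\ $\PRO=\cC(f_\infty)$, so any such $f$ automatically lies in $\PRO$.

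The main obstacle is verifying that the free parameters can be tuned so that $f(A)=B$. My strategy is to unfold $f(A)$ via its partial fractions, evaluate each piece on the Jordan normal form of $A$, and match block by block; the condition $f(A)=B$ then collapses into a linear system in the residue parameters whose coefficient matrix, after a reparametrisation coming from the Hill representation, is essentially $\BH_{A,B}$ itself. Solvability is then guaranteed by our standing assumption that $\BH_{A,B}$ is invertible, and the resulting procedure is explicit. A secondary subtlety is to keep everything real: since $A\in\BR^{n\times n}$ its non-real Jordan blocks appear in complex-conjugate pairs, and the factorization $\BH_{A,B}=N^*N$ must respect this symmetry so that the constructed poles $\pm\imag\omega_k$ come in conjugate pairs with real residues. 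I would handle this by first carrying out the construction inside $\{A\}''_\BC$, where the Hill representation is cleanest, and then descending to $\BR$ via the complex-conjugation symmetry inherited from $A\in\BR^{n\times n}$ and $B\in\{A\}''_\BR$.
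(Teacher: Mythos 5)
Your handling of (i)$\Rightarrow$(ii), (ii)$\Rightarrow$(iii) and (iii)$\Rightarrow$(iv) lines up exactly with the paper: the first is the \cic\ observation of Cohen--Lewkowicz, and the next two are imported from \cite{tHvdM22b} and the Choi/Hill machinery of Sections~\ref{S:HillRep}--\ref{S:P=CP}. The contentious part is (iv)$\Rightarrow$(i), and there your sketch diverges materially from the paper and has genuine gaps.

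The central difficulty is that you propose to fix a Foster form
$f(z)=d_0 z+d_\infty/z+\sum_k 2\sigma_k z/(z^2+\omega_k^2)$
and then solve a linear system for the residues $d_0,d_\infty,\sigma_k$. But membership in $\PRO$ requires these to be \emph{nonnegative}, so what you actually face is a constrained linear system, and nothing in your argument explains why the unique solution coming from the invertible coefficient matrix would have nonnegative entries. The hypothesis (iv) says $\BH_{A,B}$ is \emph{positive definite}, not merely invertible, and your outline never uses that positivity; without it there is no reason the residues you solve for are $\ge 0$. A second unresolved point is the choice of pole locations $\omega_k$: these are free parameters not encoded in $(A,B)$ (since $A$ is Lyapunov regular its spectrum avoids $\imag\BR$, so it is not ``compatible'' with any $\omega_k$), and your proposal gives no mechanism for selecting them so that the linear system is even square, let alone has $\BH_{A,B}$ as coefficient matrix. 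The paper avoids both issues by going in an entirely different direction: it derives the skew-intertwining identity $M_{R'}^*L_R=-L_{R'}^*M_R$ from a minimal Hill representation of $\cL_{A,B}$ (Lemma~\ref{L:SkewInter}), proves a Douglas-type factorization result (Proposition~\ref{P:DouglasSkew}) which under the suboptimality condition (Lemma~\ref{L:subopt-suf}) yields a real skew-symmetric $S=\sbm{0&\ell\\-\ell^T&-M}$ with $(S\otimes I_n)L_R=M_R$, and then \emph{defines} $f(z)=\ell(zI_m-M)^{-1}\ell^T$. Because $M=-M^T$, this realization is automatically in $\PRO$ by the positive real lemma, and $f(A)=B$ falls out of the factorization identity directly (Lemma~\ref{L:ToReal}). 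In other words, the poles and the nonnegativity of the residues are both produced simultaneously by the skew-symmetric state matrix $M$ coming out of the Douglas step --- they are never chosen or verified separately, which is precisely the part your Foster-fraction approach leaves unaddressed. If you want to salvage a Foster-type route you would need to show that a positive semidefinite pairing against $\BH_{A,B}$ forces the residues to be nonnegative, which is essentially equivalent to reinventing the factorization $\BH_{A,B}=P^*P$ and the skew-symmetric realization the paper uses.
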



The implication (i) $\Rightarrow$ (ii) is due to \cite{CL09} and was explained above, (ii) $\Rightarrow$ (iii) comes from \cite{tHvdM22b} and (iii) $\Rightarrow$ (iv) goes back to work of Oxenrider and Hill (\cite{OH85}, cf., \cite{tHvdM22a}), all without the constraint that $\BH_{A,B}$ be invertible. So it remains here to prove the implication (iv) $\Rightarrow$ (i), which we will do in Section \ref{S:Proof}. Nonetheless, in the next few sections we will review some of the work that is used in proving the other implications as well. This requires in particular new insights in linear matrix maps, and some of the observations are needed for the proof of the remaining implication.

\section{$*$-Linear matrix maps and Hill representations}\label{S:HillRep}

A linear matrix map \begin{equation} \label{cL} \cL: \BF^{q \times q} \to \BF^{n \times n}\end{equation}
is called $*$-linear if $\cL(V^*)=\cL(V)^*$ for all $V \in \BF^{q \times q}$. Given the linear matrix map $\cL$ in \eqref{cL} we define  the {\em Choi matrix} $\BL$ given by
\begin{align*}
\BL=\left[\BL_{ij}\right] \in \BF^{nq \times nq},\quad \BL_{ij}=\cL\left(\cE_{ij}^{(q)}\right) \in\BF^{n \times n},
\end{align*}
where $\cE_{ij}^{(q)}$ is the standard basis element in $\BF^{q \times q}$ with a 1 on position $(i,j)$ and zeros elsewhere, and we define the {\em matricization} of $\cL$ as the matrix $L\in \BF^{n^2 \times q^2}$ determined by the linear map
\begin{align}\label{1Matricization}\begin{aligned}
&L:\BF^{q^2} \to \BF^{n^2},\quad \text{where} \\  &L\,\left(\vect_{q\times q}(V)\right)= \vect_{n \times n}\left(\cL (V)\right),
\end{aligned}\end{align}
with $\vect_{r\times s} :\BF^{r \times s} \to \BF^{rs}$ the vectorization operator.  The matrices $\BL$ and $L$ are related through the matrix reordering of \cite{PH81,OH85}, which is discussed in \cite{tHvdM22a}, where more details on what is discussed in this section can be found. The classical result of \cite{C75} states that $\cL$ is completely positive if and only if $\BL$ is positive semidefinite. Also, the map $\cL$ is $*$-linear if and only if $\BL$ is in $\cH_{nq}$.\\

{\em Hill-representations.} In the paper \cite{H73} $*$-linear matrix maps $\cL$ are studied via representations of the form
\begin{equation}\label{HillRep}
\cL(V)=\sum_{k,l=1}^m \BH_{kl}\, C_k V C_l^*,\quad V\in\BF^{q \times q},
\end{equation}
for matrices $C_1,\ldots,C_m \in\BF^{n \times q}$. The matrix $\BH=\left[\BH_{kl}\right]_{k,l=1}^m\in\BF^{m\times m}$ is called the Hill matrix associated with the representation \eqref{HillRep}. Moreover, we call the Hill representation \eqref{HillRep} of $\cL$ minimal if $m$ is the smallest number of matrices $C_k$ that occurs in Hill representations for $\cL$, in which case $C_1,\ldots,C_m$ are linearly independent and $\BH$ is invertible, and it turns out that this smallest number equals the rank of the Choi matrix of $\cL$. In that case $\cL$ is $*$-linear precisely when $\BH$ is in $\cH_m$ and $\cL$ is completely positive if and only if $\BH$ is positive definite. Note that in the latter case, the Hill representation corresponds to a Choi-Kraus representation when $\BH=I_n$, but for us it is not convenient to restrict to this case.

If $\cL$ is given by a minimal Hill representation \eqref{HillRep}, then $L$ and $\BL$ can be represented as
\begin{equation}\label{L-BL Hill reps}
L=\sum_{k,l=1}^m \BH_{kl} \ov{C}_k \otimes C_l \ands \BL=\whatC^* \BH^T \whatC
\end{equation}
with
\begin{equation}\label{whatC}
\widehat{C}^*:=\begin{bmatrix} \vect_{n \times q}\left(C_1\right) & \hdots & \vect_{n \times q}\left(C_m\right)  \end{bmatrix},
\end{equation}
where $\widehat{C}^* \in \BF^{nq \times m}$ has full column rank. The bar in the formula for $L$ means taking complex conjugates in all entries.

Write
\begin{equation}\label{Ldec}
L=[L_{ij}]_{i=1,\ldots,n}^{j=1,\ldots,q} \mbox{ with } L_{ij}\in\BF^{n \times q}.
\end{equation}
It follows from \eqref{L-BL Hill reps} that $C_1,\ldots,C_m$ are all included in
\begin{equation}\label{cW}
\cW:=\tu{span}\{L_{ij}\}.
\end{equation}
In fact, when the Hill representation \eqref{HillRep} is minimal we have
\[
\tu{span}\{C_1,\ldots,C_m\}=\cW
\]
and, moreover, for any $C_1,\ldots,C_m\in \BF^{n \times q}$ with this property there exists a $\BH\in\cH_m$ so that $\cL$ is as in \eqref{HillRep}. It further follows from \eqref{L-BL Hill reps} that
\begin{equation}\label{cLincl}
L\in \ov{\cW}\otimes \cW.
\end{equation}

{\em The matrix map $\cL_{A,B}$.} For $A\in\BF^{n\times n}$ and $B\in\{A\}_{\BF}''$, it is easy to see that $\cL_A$ and $\cL_B$ are $*$-linear, so that also $\cL_{A,B}$ is $*$-linear. It is also easy to see that the matricizations $L_A$ of $\cL_A$ and $L_B$ of $\cL_B$ are given by
\[
L_A=A^T\otimes I_n + I_n \otimes A^*,\quad L_B=B^T\otimes I_n + I_n \otimes B^*.
\]
Since $B\in\{A\}_{\BF}''$, both $L_A$ and $L_B$ are in the matrix algebra $\ov{\{A^*\}_{\BF}''}\otimes \{A^*\}_{\BF}''$, and consequently, so is the matricization $L_{A,B}=L_B L_A^{-1}$ of $\cL_{A,B}$.

\begin{lemma}\label{L:LABstruc}
For $A\in\BF^{n\times n}$ and $B\in\{A\}_{\BF}''$, the matricization $L_{A,B}$ of $\cL_{A,B}$ is in $\ov{\{A^*\}_{\BF}''}\otimes \{A^*\}_{\BF}''$.
\end{lemma}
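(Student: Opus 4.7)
The plan is to exhibit a unital, finite-dimensional subalgebra of $\BF^{n^2\times n^2}$ that contains both $L_A$ and $L_B$ and is automatically closed under taking inverses of its invertible elements. The natural candidate is $\cA:=\ov{\{A^*\}_{\BF}''}\otimes \{A^*\}_{\BF}''$; once $L_A,L_B\in\cA$ and $L_A^{-1}\in\cA$ are verified, the conclusion $L_{A,B}=L_B L_A^{-1}\in\cA$ follows immediately.

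First I would check that the bicommutant behaves well under adjoints and entrywise complex conjugation. If $B\in\{A\}_{\BF}''$ and $X$ is any matrix commuting with $A^*$, then $X^*$ commutes with $A$, hence with $B$, so $X B^*=B^* X$; thus $B^*\in\{A^*\}_{\BF}''$, and taking entrywise complex conjugates gives $B^T=\ov{B^*}\in\ov{\{A^*\}_{\BF}''}$. The same is true with $B$ replaced by $A$. Combining this with the explicit formulas $L_A=A^T\otimes I_n+I_n\otimes A^*$ and $L_B=B^T\otimes I_n+I_n\otimes B^*$ recalled in the paragraph preceding the lemma, together with the fact that $I_n\otimes I_n$ is the unit of $\cA$, places both $L_A$ and $L_B$ inside $\cA$.

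The second, and really only nontrivial, step is to show that $L_A^{-1}\in\cA$. Since $\cL_{A,B}$ in \eqref{cL_AB} is built from $\cL_A^{-1}$, the lemma is implicitly stated under the standing hypothesis that $A$ is Lyapunov regular, and so $L_A$ is invertible in $\BF^{n^2\times n^2}$. Now $\cA$ is a finite-dimensional unital subalgebra of $\BF^{n^2\times n^2}$, so by the Cayley-Hamilton theorem the inverse of any invertible element of $\cA$ is a polynomial with scalar coefficients in that element and therefore lies back in $\cA$. Applying this to $L_A$ gives $L_A^{-1}\in\cA$, and multiplying from the left by $L_B\in\cA$ yields $L_{A,B}\in\cA$, which is the claim.

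I do not anticipate a serious obstacle on this route: the entire content is the observation that $\cA$ is a unital subalgebra closed under inversion, which is immediate from finite-dimensionality together with Cayley-Hamilton. The only point requiring minor care is the interplay between conjugation, transposition and the commutant operation, which is routine and amounts to the one-line argument in the first step.
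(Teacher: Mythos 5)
Your argument is correct and is the same as the paper's: place $L_A$ and $L_B$ in the unital subalgebra $\ov{\{A^*\}_{\BF}''}\otimes \{A^*\}_{\BF}''$ (using $A^T=\ov{A^*}$, $B^*\in\{A^*\}_\BF''$, $B^T=\ov{B^*}$) and conclude $L_{A,B}=L_BL_A^{-1}$ lies there as well because a finite-dimensional unital subalgebra is closed under inversion of its invertible elements. The paper states this in one line immediately before the lemma; you have merely spelled out the routine verifications (the transpose/adjoint bookkeeping for the bicommutant and the Cayley--Hamilton step) explicitly.
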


In case $\cL_{A,B}$ is given by a minimal Hill representation
\begin{equation}\label{cLAB-Hill1}
\cL_{A,B}(V)=\sum_{k,l=1}^m  \BH_{A,B}^{kl} C_kVC_l^*,
\end{equation}
with $V \in \BF^{n \times n}$, it follows from the above that $C_1,\ldots,C_m$ are all $\{A^*\}_{\BF}''$, although the subspace $\cW$ in \eqref{cW} may be strictly included in $\{A^*\}_{\BF}''$.

In this setting we indicate the Hill matrix associated with a minimal Hill representation of $\cL_{A,B}$ by $\wtil{\BH}_{A,B}$. Note first of all that minimal Hill representations of $\cL_{A,B}$ are not unique, however, they are unique up to an invertible matrix in $\BF^{m\times m}$, and, in particular, the Hill matrices are all congruent. Therefore, the choice of the minimal Hill representation is not relevant in the proof of Theorem \ref{T:Main}, but in \cite{tHvdM22b} a canonical choice is described. Secondly, the Hill matrix $\wtil{\BH}_{A,B}$ is of size $m\times m$ with $m=\rank \BL_{A,B}$, which may be strictly smaller than $m_\tu{max}$. In case $m<m_\tu{max}$, to obtain the Hill-Pick matrix $\BH_{A,B}$ one works with non-minimal Hill representations and one obtains $\BH_{A,B}$ as an extension of $\wtil{\BH}_{A,B}$ of the same rank $m$. However, our suboptimality condition $\rank H_{A,B}=\dim \{A\}''_{\BF}$ corresponds precisely to $m=m_\tu{max}$ so that in that case $\wtil{\BH}_{A,B}=\BH_{A,B}$ and it suffices to work with minimal Hill representations; for the general case we refer to \cite{tHvdM22b}.

\section{When are positive $*$-linear matrix maps completely positive}\label{S:P=CP}

To see the implication (ii) $\Rightarrow$ (iii) in Theorem \ref{T:Main}, we need to understand better when positive $*$-linear matrix maps are also completely positive. The title of the paper  \cite{KMcCSZ19}, namely ``There are many more positive maps than completely positive maps,'' may not seem promising, however, as observed in \cite{tHvdM21a,tHvdM22b} under certain structural conditions on the matricization $L$, this does occur. The results presented here come mainly from \cite{tHvdM22b}.

Let $\cL$ be a $*$-linear matrix map as in \eqref{cL} with matricization $L$, Choi matrix $\BL$ and minimal Hill representation as in \eqref{HillRep}. While complete positivity just corresponds to $\BL$ being positive semidefinite, or, equivalently, the Hill matrix $\BH$ positive definite, positivity of $\cL$ is less easy to verify. However, it follows from \cite[Propositions 3.1 and 3.6]{KMcCSZ19} that $\cL$ is positive if and only if
\begin{equation}\label{1PosCon}
(z\otimes x)^* \BL (z\otimes x) \ge 0,\qquad x\in\BF^n,\, z \in \BF^q.
\end{equation}
Via the factorization of the Choi matrix $\BL$ in \eqref{L-BL Hill reps}, it follows that
\begin{align*}
(z\otimes x)^* \BL (z\otimes x) = (z\otimes x)^*\whatC^* \BH^T \whatC (z\otimes x),
\end{align*}
for $x\in\BF^n$ and $z \in \BF^q$. Since $\BH$ is positive definite if and only if $\BH^T$ is positive definite, to see if a positive map $\cL$ is also completely positive comes down to the question whether $y^* \BH^T y\geq 0$ for all $y$ from
\[
\fY_{\whatC}:=\{\whatC (z\otimes x) \colon x\in\BF^n,\, z \in \BF^q\}
\]
is enough to conclude that $\BH^T$ is positive definite. Clearly this is the case when $\fY_{\whatC}=\BF^m$. However, there are also cases of completely positive $\cL$ with $\fY_{\whatC}\neq\BF^m$. Again, we point out that the close relation between minimal Hill representations of $\cL$ makes the choice of the minimal Hill representation irrelevant for the outcome here.

A specific case that is considered in \cite{tHvdM22b} is when there exists a $x\in\BF^n$ so that $\whatC (I_q\otimes x)\in\BF^{m\times q}$ has full row rank or a $z\in\BF^q$ so that $\whatC (z\otimes I_n)\in\BF^{m\times n}$ has full row rank. Since
\[
\whatC (I_q\otimes x)z=
\whatC (z\otimes x)= \whatC (z\otimes I_n)x,
\]
in these two cases clearly $\fY_{\whatC}=\BF^m$ holds, and thus positive maps with this property are also completely positive.

We now recall \cite[Theorem 4.2]{tHvdM22b} which characterize one of the two cases. There is an analogous characterization for the other case, but we do not need it here, hence will not repeat it.

\begin{theorem} \label{property C1}
Let $\cL$ in \eqref{cL} be a $*$-linear map with matricization $L$ and Choi matrix $\BL$. Set $m=\rank \BL$, decompose $L$ as in \eqref{Ldec} and define $\cW$ as in \eqref{cW}. Then for any minimal Hill representation \eqref{HillRep}, for $\whatC$ defined as in \eqref{whatC} there exists a vector $z\in \BF^q$ such that $\whatC(z \otimes I_n)$ has full row-rank if and only if the subspace $\cW$ has the following property:
\begin{align*}
\textup{(C1)} \, &\text{For any linearly independent } X_1, \ldots, X_k\\ &\text{ in } \cW,
\text{ there exists a } v \in \BF^q \text{ such that } \\ &X_1v, \ldots, X_kv
 \text{ is linearly independent in } \BF^n.
\end{align*}
Hence, if \textup{(C1)} holds, then $\fY_{\widehat{C}}=\BF^m$  and positivity and complete positivity of $\cL$ coincide. Clearly, this can only happen if $\dim \cW\leq n$.
\end{theorem}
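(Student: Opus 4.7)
The plan is to compute $\widehat{C}(z\otimes I_n)$ explicitly in terms of the matrices $C_k$ from the minimal Hill representation, thereby reducing the existence of a $z$ with full row-rank to a linear-independence question about the vectors $\{C_k v\}$, and then to connect this question to property \textup{(C1)} via the fact that $\{C_1,\ldots,C_m\}$ is a basis of $\cW$.

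First, using the column-wise vectorization convention (for which $\vect(x y^T)=y\otimes x$), a direct computation yields that for any $z\in\BF^q$ and $x\in\BF^n$,
\[
\vect(C_k)^*(z\otimes x)=(C_k\bar{z})^*x.
\]
Since by \eqref{whatC} the $k$-th row of $\widehat{C}$ is $\vect(C_k)^*$, it follows that $\widehat{C}(z\otimes I_n)$ is the $m\times n$ matrix whose $k$-th row is $(C_k\bar{z})^*$. Consequently, $\widehat{C}(z\otimes I_n)$ has full row-rank if and only if $C_1\bar{z},\ldots,C_m\bar{z}$ are linearly independent in $\BF^n$. Setting $v=\bar{z}$, the existence of a $z\in\BF^q$ for which $\widehat{C}(z\otimes I_n)$ has full row-rank is therefore equivalent to the existence of some $v\in\BF^q$ for which $C_1 v,\ldots,C_m v$ are linearly independent.

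By minimality of the Hill representation, $\{C_1,\ldots,C_m\}$ is a basis of $\cW$. For the direction \textup{(C1)}$\Rightarrow$(existence of $z$), apply \textup{(C1)} with $k=m$ and $X_i=C_i$; these are linearly independent in $\cW$ by minimality, so \textup{(C1)} delivers the required $v$. For the converse, assume that such a $v$ exists, and let $X_1,\ldots,X_k\in\cW$ be linearly independent. Expand in the chosen basis as $X_j=\sum_{i=1}^m T_{ij}C_i$, so that $T\in\BF^{m\times k}$ has rank $k$. Then as $n\times k$ matrices,
\[
\bigl[X_1 v\mid\cdots\mid X_k v\bigr]=\bigl[C_1 v\mid\cdots\mid C_m v\bigr]\,T.
\]
The first factor on the right has linearly independent columns, hence trivial kernel, so the product has rank equal to $\rank T=k$; thus $X_1 v,\ldots,X_k v$ are linearly independent and \textup{(C1)} holds. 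Because the characterization \textup{(C1)} depends only on $\cW$, this also confirms that the existence of a suitable $z$ is independent of the particular minimal Hill representation chosen, matching the wording of the theorem.

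Finally, for the ``hence'' consequences: once such a $z$ exists, the $m\times n$ matrix $\widehat{C}(z\otimes I_n)$ is surjective, so $\{\widehat{C}(z\otimes x):x\in\BF^n\}=\BF^m$ and in particular $\fY_{\widehat{C}}=\BF^m$. Combined with the positivity criterion \eqref{1PosCon} and the factorization $\BL=\whatC^*\BH^T\whatC$ from \eqref{L-BL Hill reps}, this forces any positive $\cL$ to be completely positive. Surjectivity of an $m\times n$ matrix also forces $m\leq n$, i.e.\ $\dim\cW\leq n$. The main (but purely technical) obstacle in this plan is the vectorization identity in the first paragraph; it requires care with the chosen $\vect$ convention and with complex conjugation, but once this identification is in hand the rest of the argument is elementary linear algebra.
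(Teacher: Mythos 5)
Your proof is correct: the identity $\vect_{n\times q}(C_k)^*(z\otimes x)=(C_k\bar z)^*x$ (under the column-stacking convention consistent with $\BL=\whatC^*\BH^T\whatC$) reduces the full-row-rank condition to linear independence of $C_1v,\ldots,C_mv$, and the basis-change argument with the rank-$k$ coefficient matrix $T$ correctly shows this single instance of (C1) for the basis $\{C_k\}$ is equivalent to (C1) for all linearly independent tuples in $\cW$. The paper only recalls this theorem from \cite[Theorem 4.2]{tHvdM22b} without reproducing a proof, and your argument is the natural one for it, so there is nothing further to compare.
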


As a first example, if the subspace $\cW$ consists of only upper triangular Toeplitz matrices or of only lower triangular Toeplitz matrices of size $n\times n$, then it is easy to see that (C1) holds; one simply takes the first standard unit vector $e_1$ or the last $e_n$, respectively.

More examples can be constructed by the following rules. If a subsepace $\cW\subset \BF^{n\times q}$ satisfies (C1), then:
\begin{itemize}
\item[(i)] $\ov{\cW}$ also satisfies (C1);

\item[(ii)] $P \cW Q$ also satisfies (C1) for any invertible $P \in \BF^{n\times n}$ and $Q \in \BF^{q\times q}$;

\item[(iii)] any subspace $\cV$ of $\cW$ satisfies (C1);

\item[(iv)] if $\cZ$ is a subspace of $\BF^{n' \times q'}$ which satisfies (C1), then $\cW\oplus \cZ\subset\BF^{(n+ n') \times (q+q')}$ also satisfies (C1).

\end{itemize}
Only rule (iv) is not trivial to prove \cite[Lemma 4.8]{tHvdM22b}. Using the above rules, starting with Jordan blocks, one can put together $\{D\}_\BF''$ for any matrix $D\in \BF^{n \times n}$. Hence, one of the main results of \cite[Theorem 4.13]{tHvdM22b} emerges.

\begin{theorem}
For any $D\in\BF^{n \times n}$, the algebra $\{D\}_\BF''$ satisfies (C1).
\end{theorem}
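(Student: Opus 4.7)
The plan is to display $\{D\}_\BF''$ as a linear subspace of a direct sum of bicommutants of the individual Jordan blocks of $D$, and then to invoke rules (ii), (iii), and (iv). Applying rule (ii) to the similarity bringing $D$ to its (real) Jordan canonical form, it is enough to treat $D = J$ with $J = J_1 \oplus \cdots \oplus J_r$ a direct sum of Jordan blocks: standard $J_{k}(\mu)$ blocks when $\mu \in \BF$, and, when $\BF = \BR$ and the eigenvalue is non-real, the real $2k \times 2k$ block attached to the conjugate pair $a \pm bi$.

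Next, set $\cW_0 := \{J_1\}_\BF'' \oplus \cdots \oplus \{J_r\}_\BF''$, realized as the space of block-diagonal matrices whose $i$-th block lies in $\{J_i\}_\BF''$. Since $p(J)$ is block-diagonal with $i$-th block $p(J_i) \in \{J_i\}_\BF''$ for every $p \in \BF[x]$, one obtains the key inclusion $\{J\}_\BF'' \subseteq \cW_0$; note that this inclusion is typically strict precisely when distinct Jordan blocks share an eigenvalue, which is exactly the subtlety that rule (iii) is tailored to absorb.

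To verify (C1) for each summand $\{J_i\}_\BF''$ I would appeal to cyclicity. A single Jordan block $J_i$ has characteristic and minimal polynomials of equal degree $d_i = \dim \{J_i\}_\BF''$, so $J_i$ admits a cyclic vector $v_i$; for a standard Jordan block the last standard basis vector works, and for the real block attached to $a \pm bi$ one can build $v_i$ from a Jordan chain of the complexified block. The evaluation map $X \mapsto X v_i$ from $\{J_i\}_\BF''$ into the block's underlying vector space is then an injective linear map, so $\BF$-linearly independent elements of $\{J_i\}_\BF''$ produce $\BF$-linearly independent vectors, which is exactly (C1). Iterating rule (iv) across the $r$ summands delivers (C1) for $\cW_0$, and rule (iii) applied to $\{J\}_\BF'' \subseteq \cW_0$ then finishes the proof.

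The only real subtlety is, for $\BF = \BR$, the real block attached to a complex conjugate eigenvalue pair, for which the upper-triangular-Toeplitz example does not apply verbatim; but the cyclicity observation (minimal polynomial equals characteristic polynomial, namely $((x-a)^2+b^2)^{k}$) handles this case uniformly, and the remainder of the argument is just direct-sum and subspace bookkeeping.
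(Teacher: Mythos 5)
Your argument is correct and follows essentially the same route the paper sketches: reduce to Jordan form via rule (ii), realize $\{J\}_\BF''$ as a subspace (rule (iii)) of the block-diagonal direct sum $\cW_0$ of the individual blocks' bicommutants, and assemble $\cW_0$ from its summands with rule (iv). Your cyclic-vector verification of (C1) for each single Jordan block is a clean uniformization of the paper's upper-triangular-Toeplitz example (where $e_n$ is precisely such a cyclic vector) and handles the real Jordan blocks for non-real eigenvalue pairs in one stroke.
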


From here the proof of the implication (ii) $\Rightarrow$ (iii) in Theorem \ref{T:Main} easily follows, since we know that the matricization $L_{A,B}$ of $\cL_{A,B}$ is contained in $\ov{\{A^*\}_\BF''}\otimes \{A^*\}_\BF''$. While this does not imply that the subspace $\cW$ associated with $\cL_{A,B}$ is equal to the algebra $\{A^*\}_\BF''$, it is contained in $\{A^*\}_\BF''$, since all block entries of $L_{A,B}$ are in $\{A^*\}_\BF''$, so that by rule (iii) the subspace $\cW$ of $\cL_{A,B}$ still has property (C1).

\section{The proof of the main theorem}\label{S:Proof}

In this section we complete the proof of Theorem \ref{T:Main} by proving the remaining implication (iv) $\Rightarrow$ (i). For this purpose we first prove a few auxiliary results that can be seen as steps in the proof. Of these, only one relies on the suboptimality assumption. In the first result we use a minimal Hill representation of the linear matrix map $\cL_{A,B}$ to derive an identity that is the basis of the remainder of the proof.

\begin{lemma}\label{L:SkewInter}
Let $A,B\in\BF^{n \times n}$ with $A$ Lyapunov regular and let \eqref{cLAB-Hill1} be a minimal Hill representation of $\cL_{A,B}$. Then, with $\bC$ as in \eqref{cLAB-Hill1}, for all $X\in\BF^{n\times n}$ we have
\begin{align} \label{LyapID}
\begin{aligned}
&B^*X-\bC^*\left(\BH_{A,B}\otimes A^*X\right)\bC=\\
&\qquad\qquad=\bC^*\left(\BH_{A,B}\otimes XA\right)\bC-XB.
\end{aligned}
\end{align}
Assume, in addition, $B \in \mathcal{C}_{\cL}(A)\bigcap \{A\}_{\BF}''$, so that $\cL_{A,B}$ is completely positive and thus $\BH_{A,B} \in \cP_m$. Factor $\BH_{A,B}=P^*P$ for some invertible $P \in \BF^{m\times m}$ and for $R \in \BF^{n \times n}$ set
\begin{align}\label{LRMR}\begin{aligned}
L_R&:=\begin{bmatrix}
R \\ \left(P \otimes R\right)\bC
\end{bmatrix}, \quad \text{and} \\
M_R&:=\begin{bmatrix}
RB \\ -\left(P \otimes RA\right)\bC
\end{bmatrix}.\end{aligned}
\end{align}
Then for all $R,R' \in \BF^{n \times n}$ we have
\begin{equation}\label{skewinter}
M_{R'}^*L_R=-L_{R'}^*M_R.
\end{equation}
\end{lemma}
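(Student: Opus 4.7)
The plan is first to derive \eqref{LyapID} directly from the defining identity $\cL_{A,B}\circ\cL_A=\cL_B$, and then to recognize both sides of \eqref{skewinter} as the two halves of \eqref{LyapID} applied to the particular matrix $X=(R')^{*}R$.

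First I would recast the minimal Hill representation \eqref{cLAB-Hill1} in the compact form
\[
\cL_{A,B}(Y)=\bC^{*}\bigl(\BH_{A,B}\otimes Y\bigr)\bC,\qquad Y\in\BF^{n\times n},
\]
so that the block structure of $\bC$ together with the $(k,l)$ entry pattern of the Kronecker product reproduces the bilinear sum $\sum_{k,l}\BH_{A,B}^{kl}C_k Y C_l^{*}$. Since $A$ is Lyapunov regular, $\cL_A$ is invertible, and the definition $\cL_{A,B}=\cL_B\circ\cL_A^{-1}$ gives
\[
\cL_{A,B}\bigl(XA+A^{*}X\bigr)=XB+B^{*}X\qquad (X\in\BF^{n\times n}).
\]
Splitting the left-hand side into $\cL_{A,B}(XA)+\cL_{A,B}(A^{*}X)$ by linearity and rearranging yields \eqref{LyapID} at once.

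Next I would expand the two products in \eqref{skewinter} in block form, relying on the mixed-product rule $(P^{*}\otimes S^{*})(P\otimes T)=P^{*}P\otimes S^{*}T=\BH_{A,B}\otimes S^{*}T$ that is available thanks to the factorization $\BH_{A,B}=P^{*}P$. A direct block-matrix computation gives
\begin{align*}
M_{R'}^{*}L_R &= B^{*}(R')^{*}R\;-\;\bC^{*}\bigl(\BH_{A,B}\otimes A^{*}(R')^{*}R\bigr)\bC,\\
-L_{R'}^{*}M_R &= -(R')^{*}RB\;+\;\bC^{*}\bigl(\BH_{A,B}\otimes (R')^{*}RA\bigr)\bC.
\end{align*}
Setting $X:=(R')^{*}R$, the first expression reads $B^{*}X-\cL_{A,B}(A^{*}X)$ and the second reads $-XB+\cL_{A,B}(XA)$; the identity \eqref{LyapID} says exactly that these two quantities coincide. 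Hence \eqref{skewinter} holds for all $R,R'\in\BF^{n\times n}$.

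The hypothesis $B\in\mathcal{C}_\cL(A)\cap\{A\}''_\BF$ enters only to legitimate the factorization $\BH_{A,B}=P^{*}P$ with $P$ invertible: by the implication (ii)$\Rightarrow$(iii) recalled in Section~\ref{S:P=CP}, the map $\cL_{A,B}$ is then completely positive, and since the Hill matrix of any minimal Hill representation is invertible, complete positivity upgrades $\BH_{A,B}\in\cH_m$ to $\BH_{A,B}\in\cP_m$. I do not anticipate any substantive obstacle: once the compact Hill form is in place, \eqref{LyapID} is essentially a restatement of the defining relation of $\cL_{A,B}$, and \eqref{skewinter} is then a straightforward Kronecker-product bookkeeping consequence of it.
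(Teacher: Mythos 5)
Your proposal is correct and follows essentially the same route as the paper: both derive \eqref{LyapID} by writing $\cL_B=\cL_{A,B}\circ\cL_A$, expanding via linearity and the compact Hill form $\cL_{A,B}(Y)=\bC^{*}(\BH_{A,B}\otimes Y)\bC$, and rearranging; and both obtain \eqref{skewinter} by factoring $\BH_{A,B}=P^{*}P$ and substituting $X=(R')^{*}R$ into \eqref{LyapID}. The explicit Kronecker block computation you spell out is exactly the step the paper leaves implicit.
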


\begin{proof}[\bf Proof]
From $\cL_{B}=\cL_B\cL_{A}^{-1}\cL_A$ and since the Kronecker product is left-distributive, it follows for each $X \in \BF^{n \times n}$ that
\begin{align*}
XB+B^*X
&=\cL_{B}(X)=\cL_B\cL_{A}^{-1}\cL_{A}(X)\\&=\cL_{A,B}(\cL_A(X))\\
&=\mathbf{C}^*\left(\BH_{A,B}\otimes \cL_A(X)\right)\mathbf{C}\\
&= \mathbf{C}^*\left(\BH_{A,B}\otimes \left( XA+A^*X \right) \right)\mathbf{C}\\
&= \mathbf{C}^*\left(\BH_{A,B}\otimes XA\right)\mathbf{C}\\ &\qquad \qquad +\mathbf{C}^*\left(\BH_{A,B}\otimes A^*X\right)\mathbf{C}.
\end{align*}
From this identity, \eqref{LyapID} follows. In case $B\in \mathcal{C}_{\cL}(A)\bigcap \{A\}_{\BF}''$, then \cite{tHvdM22b} implies that $\BH_{A,B}$ is positive definite. Factoring $\BH_{A,B}=P^*P$ as stated, and taking $X=R'^*R$ for $R,R' \in \BF^{n \times n}$ in \eqref{LyapID}, we obtain \eqref{skewinter}.
\end{proof}

To proceed we require a variation on the Douglas Factorization Lemma which we state below as Proposition \ref{P:DouglasSkew}. First we introduce some further notation and derive a useful lemma. For any finite collection $\bR=\{R_1,\ldots,R_k\}\subset \BF^{n \times n}$ set
\begin{align}\label{LbEMbR}\begin{aligned}
&L_\bR=\mat{L_{R_1}&\cdots L_{R_k}}\quad \text{and} \\  &M_\bR=\mat{M_{R_1}&\cdots M_{R_k}}.
\end{aligned}\end{align}
Since $M_{R'}^*L_R=-L_{R'}^*M_R$ for all $R,R'\in\BR^{n \times n}$, we also have
\[
M_{\bR}^*L_{\bR}=-L_{\bR}^*M_{\bR}
\]
for each finite collection $\bR\subset \BF^{n \times n}$. Further, define the subspaces
\begin{equation}\label{cVcW}
\begin{aligned}
\cU=\textup{span}\left\{L_Rx: \, R \in \BF^{n \times n}, \, x \in \BF^n\right\},\\
\cV=\textup{span}\left\{M_Rx: \, R \in \BF^{n \times n}, \, x \in \BF^n\right\}
\end{aligned}
\end{equation}
and note that $\tu{Ran}\, L_\bR\subset\cU$ and $\ran M_\bR\subset\cV$ for each finite collection $\bR\subset \BF^{n \times n}$.

Next we prove a lemma that will be of use in the proof of Proposition \ref{P:DouglasSkew} below.

\begin{lemma}\label{L:cVcWform}
Let $A,B\in\BF^{n \times n}$ with $A$ Lyapunov regular and  $B \in \mathcal{C}_{\cL}(A)\bigcap \{A\}_{\BF}''$.  Then the subspaces $\cU$ and $\cV$ defined by \eqref{cVcW} are of the form $\cU=\wtil{\cU}\otimes \BF^n$ and $\cV=\wtil{\cV}\otimes \BF^n$ with
\begin{equation}\label{tilcVtilcW}
\begin{aligned}
\wtil{\cU}= & \left\{x \in \BF^{m+1}: \exists\, 0\neq y \in \BF^n \right. \\ &\left. \qquad \qquad \quad \quad\text{ so that }x \otimes y \in \cU \right \}, \\
\widetilde{\cV}= &\left\{x \in \BF^{m+1}: \exists\, 0\neq y \in \BF^n \right. \\ &\left. \quad \quad\qquad \qquad \text{ so that }x \otimes y \in \cV \right \}.
\end{aligned}
\end{equation}
\end{lemma}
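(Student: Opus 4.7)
The plan is to exploit the block structure of $L_R$ and $M_R$ to extract a tensor-product form for $\cU$ and $\cV$. First I would use the mixed-product property of the Kronecker product to rewrite $(P\otimes R)\bC=(I_m\otimes R)(P\otimes I_n)\bC$. Setting $D_k:=\sum_{j=1}^m p_{kj}C_j$ for $k=1,\ldots,m$ and $D_0:=I_n$, the matrix $L_R$ assembles as the block column whose $k$-th block is $RD_k$, so that
\[
L_R=(I_{m+1}\otimes R)\,D,\qquad D:=\mat{I_n\\ D_1\\ \vdots\\ D_m}\in\BF^{(m+1)n\times n},
\]
and consequently $L_Rx=(I_{m+1}\otimes R)Dx$ has $k$-th block $RD_kx$. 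The identical manipulation applied to $M_R$ gives $M_R=(I_{m+1}\otimes R)\wtil{D}$, where the $0$-th block of $\wtil{D}$ is $B$ and the $k$-th block is $-AD_k$ for $k=1,\ldots,m$.

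Next, under the identification $\BF^{(m+1)n}\cong\BF^{m+1}\otimes\BF^n$ in which the block vector $(u_0,u_1,\ldots,u_m)$ corresponds to the $n\times(m+1)$ matrix $[u_0\,|\,u_1\,|\,\cdots\,|\,u_m]$, the element $L_Rx$ corresponds to the matrix $RX_x$ with $X_x:=[D_0x\,|\,D_1x\,|\,\cdots\,|\,D_mx]\in\BF^{n\times(m+1)}$. For fixed $x\in\BF^n$ and varying $R\in\BF^{n\times n}$, the $i$-th row of $RX_x$ is $r_i^TX_x$ with $r_i^T$ the $i$-th row of $R$, and can therefore be chosen to be an arbitrary element of the row-space $\tu{rowsp}(X_x)\subseteq\BF^{m+1}$ independently of the other rows. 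Hence $\{L_Rx:R\in\BF^{n\times n}\}$, viewed as a subspace of $\BF^{m+1}\otimes\BF^n$, coincides with $\tu{rowsp}(X_x)\otimes\BF^n$. Taking the span over all $x\in\BF^n$ and using distributivity of the tensor product over sums of subspaces, I obtain $\cU=\wtil{\cU}_0\otimes\BF^n$ with
\[
\wtil{\cU}_0:=\tu{span}\left\{(v^TD_0x,v^TD_1x,\ldots,v^TD_mx)^T:v,x\in\BF^n\right\}\subseteq\BF^{m+1}.
\]
The same argument applied to $M_R$ produces a subspace $\wtil{\cV}_0\subseteq\BF^{m+1}$ with $\cV=\wtil{\cV}_0\otimes\BF^n$.

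It remains to reconcile $\wtil{\cU}_0$ and $\wtil{\cV}_0$ with the subspaces $\wtil{\cU}$ and $\wtil{\cV}$ in \eqref{tilcVtilcW}. Given $\cU=\wtil{\cU}_0\otimes\BF^n$, a pure tensor $z\otimes y\in\cU$ with $y\neq 0$ corresponds under the identification above to the rank-one matrix $yz^T$, whose nonzero rows are scalar multiples of $z^T$; since every row lies in $\wtil{\cU}_0$, necessarily $z\in\wtil{\cU}_0$. Conversely, any $z\in\wtil{\cU}_0$ satisfies $z\otimes y\in\wtil{\cU}_0\otimes\BF^n=\cU$ for any $y\in\BF^n$. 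Hence $\wtil{\cU}=\wtil{\cU}_0$, and symmetrically $\wtil{\cV}=\wtil{\cV}_0$. The only real subtlety is the passage between the block-vector picture in which $L_R,M_R$ naturally live and the tensor-product picture used to state the lemma; once the identification $L_Rx\leftrightarrow RX_x$ is installed, the tensor factorization is essentially forced by the observation that "rows in a prescribed subspace" is exactly the subspace $\tu{(subspace)}\otimes\BF^n$. The hypothesis $B\in\mathcal{C}_{\cL}(A)\cap\{A\}_{\BF}''$ enters only via Lemma~\ref{L:SkewInter}, where it guarantees positive-definiteness of $\BH_{A,B}$ and hence the existence of the invertible factor $P$ in $\BH_{A,B}=P^*P$ used to define $L_R$ and $M_R$.
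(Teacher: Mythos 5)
Your proof is correct, and it takes a somewhat different, more constructive route than the paper's. The paper establishes the invariance relation $(I_{m+1}\otimes R')L_R = L_{R'R}$ (Eq.~\eqref{Rcalc}) and then runs an abstract invariant-subspace argument in the spirit of \cite[Theorem 6.1]{GGK03}: take a generic $v=\sum_i x_i\otimes y_i\in\cU$ with the $y_i$ linearly independent, and use suitable choices of $R$ to isolate each $x_j\otimes\BF^n\subset\cU$. You instead factor $L_R=(I_{m+1}\otimes R)D$ with $D$ the block column whose blocks are $D_0=I_n$ and $D_k=\sum_j p_{kj}C_j$, pass to the matrix picture $L_Rx\leftrightarrow RX_x$, and identify $\{L_Rx:R\}$ for fixed $x$ directly as $\tu{rowsp}(X_x)\otimes\BF^n$; the tensor factorization of $\cU$ then appears by inspection rather than via an a priori invariance argument. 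The factorization $L_R=(I_{m+1}\otimes R)D$ is of course exactly what drives the paper's identity \eqref{Rcalc}, so the underlying mechanism is shared; the difference is that your version is explicit and, as a bonus, produces closed-form descriptions of $\wtil{\cU}$ and $\wtil{\cV}$ as
\[
\wtil{\cU}=\tu{span}\left\{(v^TD_0x,\ldots,v^TD_mx)^T : v,x\in\BF^n\right\},
\]
and similarly for $\wtil\cV$ with $(B,-AD_1,\ldots,-AD_m)$ in place of $(D_0,\ldots,D_m)$, which could be useful downstream. The paper's version is shorter and does not need to introduce the $D_k$'s, but gives $\wtil\cU$ only implicitly via the defining condition in \eqref{tilcVtilcW}. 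One minor point worth spelling out in your write-up: when you pass to ``rows chosen independently in $\tu{rowsp}(X_x)$,'' you should note explicitly that for a target $n\times(m+1)$ matrix with $j$-th row $w_j^T\in\tu{rowsp}(X_x)$ one picks $r_j$ with $r_j^TX_x=w_j^T$ and stacks them into $R$; this is the step that makes the identification $\{RX_x:R\}=\tu{rowsp}(X_x)\otimes\BF^n$ an equality rather than just an inclusion.
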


\begin{proof}[\bf Proof]
We prove the formula for $\cU$. The proof for $\cV$ goes analogously.

Note that $\left(I_{m+1} \otimes R\right)\cU\subset \cU$ for any matrix $R \in \BF^{n \times n}$, which follows from the fact that
\begin{equation}\label{Rcalc}
\left(I_{m+1} \otimes R'\right)L_R=L_{R'R},
\end{equation} for all $R,R' \in \BF^{n \times n}$.
The identity $\BF^{(m+1)\cdot n}=\BF^{m+1}\otimes \BF^n$ shows that vectors in $\BF^{(m+1)\cdot n}$ are of the form
\[
\sum_{i=1}^p x_i \otimes y_i, \quad \text{where} \quad x_i \in \BF^{m+1} \quad \text{and} \quad y_i \in \BF^{n}.
\]
Take any element $v=\sum_{i=1}^p x_i\otimes y_i \in \cU$.  By the same argument as in \cite[Theorem 6.1]{GGK03}, we can ensure that $y_1, \ldots, y_p$ are linearly independent. In particular, we then have $p\leq n$. From
\[
\left(I_{m+1} \otimes R\right)\sum_{i=1}^p x_i \otimes y_i = \sum_{i=1}^p x_i\otimes Ry_i \in \cU,
\]
and by choosing $R$ so that $Ry_i=0$ for $i\ne j$, for some $j$, and $Ry_j=z$ with $z\in\BF^n$ arbitrary, which is possible since $p\leq n$, we see that \begin{align*} \label{subspace v} x_j \otimes \BF^n \subseteq \cU \quad \text{for all} \quad j.\end{align*} Using this property we can see at once that $\widetilde{\cU}$ is a subspace. Moreover, it follows that $x_j\in\wtil{\cU}$ for each $j$. However, this in turn implies that $v\in \widetilde{\cU}\otimes \BF^n$. Hence we proved that $\cU \subseteq \widetilde{\cU}\otimes \BF^n$.

For the reverse inclusion $\widetilde{\cU}\otimes \BF^n \subseteq \cU$, take $\sum_{i=1}^p x_i \otimes y_i \in \widetilde{\cU} \otimes \BF^n.$ Hence $x_i \in \widetilde{\cU}$ for all $i$, which means there exist non-zero vectors $z_i$ such that $x_i\otimes z_i \in \cU$. Again we make use of $\left(I_{m+1}\otimes R\right)\cU\subset \cU$ for all $R \in \BF^{n \times n}$. Vary $R$ so that $y_i=Rz_i$ for all $i$. Then it follows that $x_i\otimes y_i \in \cU$ for all $i$ and therefore also $\sum_{i=1}^p x_i \otimes y_i \in \cU$. Thus the inclusion $\widetilde{\cU}\otimes \BF^n \subseteq \cU$ holds and we have proved that $\cU=\widetilde{\cU}\otimes \BF^n.$
\end{proof}

Now we are ready for our variation on the Douglas Factorization Lemma.

\begin{proposition}\label{P:DouglasSkew}
Let $A,B\in\BF^{n \times n}$ with $A$ Lyapunov regular and  $B \in \mathcal{C}_{\cL}(A)\bigcap \{A\}_{\BF}''$. For $R\in\BF^{n \times n}$, define $L_R$ and $M_R$ as in \eqref{LRMR}. Then there exists a matrix $\whatS\in\BF^{(1+m)n \times (1+m)n}$ so that
\begin{equation}\label{SLR=MR}
\whatS L_R=M_R\quad\mbox{for each } R\in\BF^{n \times n}
\end{equation}
if and only if $\kr L_{\bR} \subset \kr M_{\bR}$ for all finite collections $\bR\subset \BF^{n \times n}$. Moreover, it suffices to verify $\kr L_{\bR} \subset \kr M_{\bR}$ for a single collection $\bR\subset \BF^{n \times n}$ with the property that $\ran L_{\bR}=\cU$, with $\cU$ as in \eqref{cVcW}. Furthermore, if there exists a matrix $\whatS\in\BF^{(1+m)n \times (1+m)n}$ satisfying \eqref{SLR=MR}, then without loss of generality one may assume $\whatS=S\otimes I_n$ for a $S\in\BF^{(1+m) \times (1+m)}$ that satisfies $S^*=-S$.
\end{proposition}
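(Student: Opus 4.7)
The proposition has three assertions, which I will address in sequence.

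For the Douglas-type equivalence between the existence of $\whatS$ and the kernel condition for all finite $\bR$, the forward implication is immediate: if $\whatS L_R = M_R$ for every $R$, then by concatenation $\whatS L_\bR = M_\bR$, and hence $\ker L_\bR \subseteq \ker M_\bR$. For the converse, I will construct $\whatS$ by first defining a linear map $T$ on $\bigcup_\bR \ran L_\bR = \cU$ via the prescription $T(L_\bR y) := M_\bR y$, with consistency across different collections $\bR$ and $\bR'$ guaranteed by applying the kernel condition to the combined collection $\bR \cup \bR'$. Any linear extension of $T$ to $\BF^{(1+m)n}$ (e.g., by picking an arbitrary linear complement of $\cU$ and sending it to $0$) then serves as $\whatS$.

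For the reduction to a single collection, assume $\ran L_{\bR_0} = \cU$ and $\ker L_{\bR_0} \subseteq \ker M_{\bR_0}$ and define $T : \cU \to \BF^{(1+m)n}$ by $T(L_{\bR_0} y) := M_{\bR_0} y$. The task is to verify $T L_R = M_R$ for every $R$, i.e., that $L_R x = L_{\bR_0} y$ implies $M_R x = M_{\bR_0} y$. My main tool is the skew-interlocking identity \eqref{skewinter} from Lemma \ref{L:SkewInter}: pairing the vanishing relation $L_R x - L_{\bR_0} y = 0$ with an arbitrary $M_{R'} y'$ on the left and applying $M_{R'}^* L_R = -L_{R'}^* M_R$ yields $(L_{R'} y')^*(M_R x - M_{\bR_0} y) = 0$ for every $R', y'$, so that $M_R x - M_{\bR_0} y$ lies in $\cV \cap \cU^\perp$. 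The main obstacle of the proof is then to verify that $\cV \cap \cU^\perp = \{0\}$; by Lemma \ref{L:cVcWform} this reduces to showing $\wtil{\cV} \cap \wtil{\cU}^\perp = \{0\}$ in $\BF^{m+1}$, which I plan to deduce by unwrapping the explicit block forms $L_I = \tu{col}(I_n, D_1, \ldots, D_m)$ and $M_I = \tu{col}(B, -AD_1, \ldots, -AD_m)$ with $D_i = \sum_j P_{ij} C_j$, exploiting the invertibility of $P$ (equivalently of $\BH_{A,B}$) afforded by the suboptimality assumption.

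For the final claim that $\whatS$ may be taken of the form $S \otimes I_n$ with $S^* = -S$, I first observe the block identities $L_R = (I_{m+1} \otimes R) L_I$ and $M_R = (I_{m+1} \otimes R) M_I$, which follow directly from the definitions. Consequently, if $\whatS = S \otimes I_n$, then $\whatS L_R = M_R$ for every $R$ collapses to the single equation $(S \otimes I_n) L_I = M_I$. Using the tensor decomposition $\cU = \wtil{\cU} \otimes \BF^n$ of Lemma \ref{L:cVcWform}, I factor $T|_{\cU}$ as $S_0 \otimes I_n$ for an appropriate $S_0 : \wtil{\cU} \to \BF^{m+1}$, and then extend $S_0$ to $S$ on $\BF^{m+1}$. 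The skew-symmetry $S^* = -S$ follows by substituting $M_R = \whatS L_R$ into \eqref{skewinter}: this produces $L_{R'}^*(\whatS^* + \whatS) L_R = 0$ for all $R, R'$, so that $\whatS^* + \whatS$ vanishes on $\cU$; in the tensor form this forces $S^* + S = 0$ on $\wtil{\cU}$, and I may finally adjust the extension of $S_0$ to make $S$ skew-Hermitian on all of $\BF^{m+1}$.
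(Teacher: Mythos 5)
Your three-part decomposition mirrors the paper's proof, and two of the three parts follow essentially the same route. For the necessity of the kernel conditions you apply Douglas's lemma and note the compatibility across growing collections; for the final claim you exploit the identity $(I_{m+1}\otimes R')L_R = L_{R'R}$ (the paper's \eqref{Rcalc}) together with the tensor structure $\cU=\wtil{\cU}\otimes\BF^n$ from Lemma \ref{L:cVcWform} and the vanishing of $L_\bR^*(\whatS+\whatS^*)L_\bR$ to extract $S\otimes I_n$ with $S^*=-S$. Both of these match the paper step by step.

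Your Part~2 is a genuinely different route. Where the paper tracks the Douglas solutions $\whatS_\bR$ as $\bR$ grows and identifies $\whatS_{\bR_0}$ with $\whatS_{\bR_0\cup\{R\}}$ once $\ran L_{\bR_0}=\cU$, you instead define $T$ on $\cU$ from a single kernel condition and try to verify $TL_R=M_R$ directly, by showing the discrepancy $M_Rx-M_{\bR_0}y$ sits in $\cV\cap\cU^\perp$ and then arguing this intersection is trivial. The skew-identity computation that places the discrepancy in $\cV\cap\cU^\perp$ is correct, and this reduction is a reasonable alternative strategy.

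The gap is in your plan for $\cV\cap\cU^\perp=\{0\}$. You propose to deduce it by "exploiting the invertibility of $P$ (equivalently of $\BH_{A,B}$) afforded by the suboptimality assumption," but Proposition~\ref{P:DouglasSkew} carries no suboptimality hypothesis; it only assumes $A$ Lyapunov regular and $B\in\cC_\cL(A)\cap\{A\}''_\BF$. Under those hypotheses, $P$ is automatically an invertible $m\times m$ matrix because Lemma~\ref{L:SkewInter} already gives $\BH_{A,B}\in\cP_m$ for a minimal Hill representation; the suboptimality condition is the separate assertion that $m=m_\textup{max}$, and it does not enter this proposition or the invertibility of $P$. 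So the lever you cite is both not available and not the right one, and your sketch does not actually establish $\cV\cap\cU^\perp=\{0\}$. (Note that the inclusion $\cV\subset\cU$ \emph{does} follow once one has $\whatS$ skew-Hermitian with $\cU^\perp\subset\kr\whatS$, since then $\cV\subset\ran\whatS=(\kr\whatS)^\perp\subset\cU$; but invoking that here is circular, since $\whatS$ is exactly what you are trying to produce.) This is the concrete missing ingredient that needs a correct argument from the stated hypotheses, or a switch to the paper's consistency-across-collections argument which avoids having to prove $\cV\cap\cU^\perp=\{0\}$ at all.
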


\begin{proof}[\bf Proof]
We will divide the proof into three steps.

\noindent{\bf Step 1.} First we consider what Douglas Factorization Lemma tells us for fixed finite collections of $\BF^{n \times n}$ and prove the necessity of the existence criteria.

Fix a finite collection $\bR\subset\BF^{n \times n}$. By Douglas Factorization Lemma there exists a $\whatS_\bR\in\BF^{(1+m)n \times (1+m)n}$ so that $\whatS_\bR L_\bR=M_\bR$ if and only if $\kr L_{\bR} \subset \kr M_{\bR}$. Moreover, set
\begin{equation}\label{cVRcWR}
\cU_\bR=\ran L_\bR \ands \cV_\bR=\ran M_\bR.
\end{equation}
Then $\whatS_\bR$ satisfying $\whatS_\bR L_\bR=M_\bR$ is uniquely determined if one further demands that $\cU_\bR^{\perp} \subset \kr \whatS_\bR$, in which case $\ran \whatS_\bR=\cV_\bR$. In case $\bR'\subset\BF^{n \times n}$ is another finite collection so that $\bR\subset \bR'$, then $\cU_\bR\subset \cU_{\bR'}$, $\cV_\bR\subset \cV_{\bR'}$ and we have that $L_\bR$ and $M_\bR$ correspond to restrictions of $L_{\bR'}$ and $M_{\bR'}$ to the appropriate block columns (ignoring possible reordering of the entries of $\bR$ and $\bR'$), and consequently we have that $\whatS_{\bR'}|_{\cU_\bR}=\whatS_\bR$.

Clearly, if $\whatS L_R=M_R$ for each $R\in\BF^{n \times n}$, then also $\whatS L_\bR=M_\bR$ for all finite collections $\bR\subset\BF^{n \times n}$, so that the necessity of $\kr L_{\bR} \subset \kr M_{\bR}$ follows by the above observations.\smallskip

\noindent{\bf Step 2.} Next we prove the sufficiency of the existence criteria, show that we can work with a single finite collection $\bR$ in $\BF^{n \times n}$ so that $\ran L_\bR=\cU$ and that we can make $\whatS$ unique by demanding $\whatS|_{\cU^\perp}=0$. To this end, assume that $\kr L_{\bR} \subset \kr M_{\bR}$ holds for all finite collections $\bR\subset\BF^{n \times n}$, so that there exists a $\whatS_\bR\in\BF^{(1+m)n \times (1+m)n}$ so that $\whatS_\bR L_\bR=M_\bR$ for all finite collections $\bR\subset\BF^{n \times n}$. Take $\whatS_\bR$ with $\whatS_\bR|_{\cU_\bR^\perp}=0$, so that $\whatS_\bR$ is uniquely determined. Now take a finite collection $\bR\subset\BF^{n \times n}$ so that $\cU_\bR=\cU$, which exists by a dimension argument. Let $R\in \BF^{n \times n}$ be arbitrary and set $\bR'=\bR\cup\{R\}$. Then $\cU=\cU_\bR\subset \cU_{\bR'}\subset\cU$, so that $\cU_\bR= \cU_{\bR'}=\cU$ and we have $\whatS_\bR=\whatS_{\bR'}$ because they coincide on both $\cU$ and $\cU^\perp$. Comparing the block column corresponding to the position of $R$ in the identity $\whatS_\bR L_{\bR'}=M_{\bR'}$, it follows that $\whatS_\bR L_{R}=M_{R}$ holds for arbitrarily chosen $R\in\BF^{n \times n}$, with $\whatS_\bR$ no depending on $R$. With the observations of the first part we note that $\whatS_\bR$ exists if and only if $\kr L_{\bR} \subset \kr M_{\bR}$, which we only have to verify for the selected finite collection $\bR$. It also follows that $\whatS=\whatS_\bR$ is uniquely determined if we demand in addition that $\whatS|_{\cU^\perp}=0$.\smallskip

\noindent{\bf Step 3.} In the third step we prove the final claim, that is, that without loss of generality $\whatS$ has the form $\whatS=S\otimes I_n$ with  $S\in\BF^{(1+m) \times (1+m)}$ satisfying $S^*=-S$. Let $\bR\subset\BF^{n \times n}$ be a finite collection so that $\cU_\bR=\cU$. Assume that $\kr L_{\bR} \subset \kr M_{\bR}$, so that $\whatS$ be as in Step 2 exists, in particular with $\whatS|_{\cU^\perp}=0$. Then we can decompose $\whatS$ as
\[
\widehat{S}=\begin{bmatrix}
    \widehat{S}_{11} & 0 \\ \widehat{S}_{21} & 0
   \end{bmatrix}: \begin{bmatrix}
    \cU \\ \cU^\perp
   \end{bmatrix} \to \begin{bmatrix}
    \cU \\ \cU^\perp
   \end{bmatrix}.
\]
Recall that Lemma \ref{L:SkewInter} implies that $L_\bR^*M_\bR=-M_\bR^*L_\bR$. Since $\whatS L_\bR =M_\bR$, it follows that
\begin{align*}
0 &=L_\bR^*M_\bR+M_\bR^*L_\bR=L_\bR^*\whatS L_\bR+L_\bR^* \whatS^* L_\bR\\
&=L_\bR^*(\whatS+ \whatS^*)L_\bR.
\end{align*}
The fact that $\ran L_\bR=\cU$ then implies that  $S_{11}+S_{11}^*=0$, hence that $S_{11}$ is skew-Hermitian.

Next we show that
\begin{equation} \label{on image}
\widehat{S}\left(I_{m+1}\otimes R\right)v=\left(I_{m+1}\otimes R\right)\widehat{S}v, 
\end{equation} for $\  \mbox{$R \in \BF^{n \times n}$ and $v\in\cU$}.$
To see that this is the case, in \eqref{SLR=MR} replace $R$ by $RR'$ which gives
    \[\widehat{S}L_{RR'}
    x=M_{RR'}x, \quad \text{for all} \quad x \in \BF^n.\]
Then, using \eqref{Rcalc} and a similar identity with $L_R$ replaced by $M_R$, we see that
    \begin{align*}
&\widehat{S}\left(I_{m+1}\otimes R\right)L_{R'}x
=\widehat{S}L_{RR'}x
=M_{RR'}x\\
&\qquad=\left(I_{m+1}\otimes R\right) M_{R'}x= \left(I_{m+1}\otimes R\right)\widehat{S} L_{R'}x. \end{align*}
By varying $R'$ from the entries in $\bR$ and $x\in\BF^n$ and taking linear combinations, it follows that \eqref{on image} indeed holds for all $v \in \cU.$

From the fact that $\cU=\widetilde{\cU} \otimes \BF^n$ for the subspace $\widetilde{\cU}\subseteq \BF^{m+1}$ as defined in \eqref{tilcVtilcW}, which gives $\cU^\perp=\widetilde{\cU}^\perp \otimes \BF^n$, it follows that \[I_{m+1}\otimes R = \begin{bmatrix}
    I_{\widetilde{\cU}}\otimes R & 0 \\ 0 & I_{\widetilde{\cU}^\perp} \otimes R
   \end{bmatrix}:\begin{bmatrix}
    \cU \\ \cU^\perp
   \end{bmatrix} \to \begin{bmatrix}
    \cU \\ \cU^\perp
   \end{bmatrix}.\]
Then \eqref{on image} yields
\begin{align*}
\begin{bmatrix}
\widehat{S}_{11}\left(I_{\widetilde{\cU}}\otimes R\right) \\ \widehat{S}_{21}\left(I_{\widetilde{\cU}}\otimes R\right)\end{bmatrix}
&=\widehat{S}\left(I_{m+1}\otimes R\right)|_{\cU}\\&
=\left(I_{m+1}\otimes R\right)\widehat{S}|_{\cU}\\
&=\begin{bmatrix} I_{\widetilde{\cU}}\otimes R & 0 \\ 0 & I_{\widetilde{\cU}^\perp} \otimes R
   \end{bmatrix}\begin{bmatrix}
    \widehat{S}_{11}\\\widehat{S}_{21}
   \end{bmatrix}\\&=\begin{bmatrix}
    \left(I_{\widetilde{\cU}}\otimes R\right)\widehat{S}_{11} \\ \left(I_{\widetilde{\cU}^\perp} \otimes R\right)\widehat{S}_{21}
   \end{bmatrix}.
   \end{align*}
   From $ \widehat{S}_{11}\left(I_{\widetilde{\cU}}\otimes R\right)=\left(I_{\widetilde{\cU}}\otimes R\right)\widehat{S}_{11}$ and $\widehat{S}_{21}\left(I_{\widetilde{\cU}}\otimes R\right)=\left(I_{\widetilde{\cU}^\perp} \otimes R\right)\widehat{S}_{21}$, we see that the $n \times n$ blocks in $\widehat{S}_{11}$ and $\widehat{S}_{21}$ commute with all matrices $R \in \BF^{n \times n}$. Thus all $n \times n$ blocks in $\widehat{S}_{11}$ and $\widehat{S}_{21}$ have the form of a scalar times $I_n$ and therefore we can write \[\widehat{S}_{11}=S_{11}\otimes I_n \quad \text{and} \quad \widehat{S}_{21}=S_{21}\otimes I_n.\]
Since $\whatS_{11}$ is skew-Hermitian, clearly also $S_{11}$ is skew-Hermitian.

Now simply replace the 0 in the $(1,2)$-entry in the $2\times 2$ block decomposition $\whatS$ by $-\whatS_{21}^*$, that is, we chance $\whatS$ to
\[\whatS=\mat{\whatS_{11}&-\whatS_{21}^*\\\whatS_{21}&0}=S\otimes I_n\] with \[S=\mat{S_{11}&-S_{21}^*\\S_{21}&0}.\]
It is clear that the adjusted $\whatS$ has the required form. Since  making chances in $\whatS|_{\cU^\perp}$ does not alter the validity of \eqref{SLR=MR}, this identity remains true for the new $\whatS$.
\end{proof}

\begin{remark}
The matrix $\whatS$ satisfying \eqref{SLR=MR} with the additional properties in the last claim of Proposition \ref{P:DouglasSkew} is not unique. However, the proof shows precisely the level of freedom that is left. Indeed, the identity \eqref{skewinter} uniquely determines $\whatS|_{\cU}$. Moreover, the property that $\whatS^*=-\whatS$ determined the right upper corner in the $2 \times 2$ decomposition of $\whatS$. However, for the right lower corner, instead of 0, we are free to choose any skew-Hermitian linear map $\whatS_{22}$ from $\cU^\perp$ into itself, of the form $\whatS_{22}=S_{22}\otimes I_n$. Hence, the level of freedom is restricted to skew-Hermitian linear maps on $\wtil{\cU}^\perp$. Varying the skew-Hermitian linear map $S_{22}$, will likely lead to different solutions via the formula in Lemma \ref{L:ToReal} below, but probably not all.
\end{remark}

Next we provide a condition under which the matrix $\whatS$ satisfying \eqref{SLR=MR} exists, which includes the suboptimal case.

\begin{lemma}\label{L:subopt-suf}
Let $A,B\in\BF^{n \times n}$ with $A$ Lyapunov regular and $B \in \mathcal{C}_{\cL}(A)\bigcap \{A\}_{\BF}''$. Assume that the matrices $C_1,\ldots,C_m\in \{A^*\}_\BF''$ in the minimal Hill representation \eqref{cLAB-Hill1} of $\cL_{A,B}$ are such that  $\tu{span}\{C_1,\ldots,C_m,I_n\}=\{A^*\}_\BF''$. Then $\kr L_{\bR} \subset \kr M_{\bR}$ for all finite collections $\bR\subset \BF^{n \times n}$. In particular, this is the case when $m=m_\tu{max}$.
\end{lemma}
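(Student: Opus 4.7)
The plan is to unpack the defining identities $L_{\bR}y=0$ and $M_{\bR}y=0$ into concrete linear conditions on a vector $y\in\BF^{Nn}$ decomposed into $N$ blocks $y_1,\ldots,y_N\in\BF^n$, and then to exploit the algebra structure of $\{A\}_\BF''$ to derive the second vanishing from the first. Writing $\bC$ as the stacking of $C_1^*,\ldots,C_m^*$ (the convention that makes $\bC^*(\BH_{A,B}\otimes V)\bC=\sum_{k,l}\BH_{A,B}^{kl}C_kVC_l^*$), the block structure of $L_R$ in \eqref{LRMR}, together with the invertibility of $P$ in the factorization $\BH_{A,B}=P^*P$, should turn $L_{\bR}y=0$ into the two families of conditions $\sum_{j=1}^N R_j y_j=0$ and $\sum_{j=1}^N R_j C_l^* y_j=0$ for $l=1,\ldots,m$. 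By linearity these are equivalent to the single uniform statement that $\sum_j R_j D y_j=0$ for every $D\in\textup{span}\{C_1^*,\ldots,C_m^*,I_n\}$.

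The hypothesis then enters through the elementary observation that entrywise adjoint is a linear bijection between $\{A\}_\BF''$ and $\{A^*\}_\BF''$; hence the assumption $\textup{span}\{C_1,\ldots,C_m,I_n\}=\{A^*\}_\BF''$ converts into $\textup{span}\{C_1^*,\ldots,C_m^*,I_n\}=\{A\}_\BF''$. This latter algebra contains $B$ (by hypothesis) and contains $A C_l^*$ for each $l$, since $C_l^*\in\{A\}_\BF''$ and $\{A\}_\BF''$ is closed under left multiplication by $A$. Substituting $D=B$ and $D=AC_l^*$ into the derived uniform condition then yields $\sum_j R_j B y_j=0$ and $\sum_j R_j AC_l^* y_j=0$ for every $l$, which the analogous block-structure computation for $M_R$ shows to be equivalent to $M_{\bR}y=0$.

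For the ``in particular'' clause, a minimal Hill representation has $C_1,\ldots,C_m$ linearly independent in $\{A^*\}_\BF''$, while the adjoint bijection gives $\dim\{A^*\}_\BF''=\dim\{A\}_\BF''=m_{\textup{max}}$. When $m=m_{\textup{max}}$ the matrices $C_1,\ldots,C_m$ therefore already span $\{A^*\}_\BF''$, so $I_n$ belongs to the span automatically and the main hypothesis holds.

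I expect the main obstacle to be organizational rather than conceptual: arranging the two unpackings of $L_{\bR}y=0$ and $M_{\bR}y=0$ in a uniform ``test-matrix $D$'' form, so that a single family of conditions simultaneously captures both the top block (with $D=I_n$ versus $D=B$) and the bottom block (with $D=C_l^*$ versus $D=AC_l^*$). Once this reformulation is in place, membership $B\in\{A\}_\BF''$ together with closure of $\{A\}_\BF''$ under left multiplication by $A$ concludes the argument with no further calculation.
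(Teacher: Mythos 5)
Your proposal is correct and follows essentially the same route as the paper's proof: unpack $L_{\bR}y=0$ into $\sum_j R_j y_j=0$ and $\sum_j R_j C_l^* y_j=0$ using the invertibility of $P$, pass via adjoints to $\textup{span}\{C_1^*,\ldots,C_m^*,I_n\}=\{A\}_\BF''$, and observe $B,\,AC_l^*\in\{A\}_\BF''$ to deduce the corresponding vanishings for $M_{\bR}y$. Your uniform ``test-matrix $D$'' phrasing is just a tidier packaging of the paper's explicit expansion of $B$ and $AC_l^*$ as linear combinations of $C_i^*$ and $I_n$, and the ``in particular'' argument is identical.
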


\begin{proof}[\bf Proof]
By assumption, we have \\ $\tu{span}\{C_1^*,\ldots,C_m^*,I_n\}=\{A\}_\BF''$. Since $\{A\}_\BF''$ is a unital algebra that contains both $A,B$ and $C_1^*,\ldots,C_m^*$, it follows that $AC_1^*,\ldots,AC_m^*,B\in \tu{span}\{C_1^*,\cdots,C_m^*,I_n\}$.

Consider an arbitrary finite collection $\bR=\{R_1,\ldots,R_k\}\subset\BF^{n \times n}$. Assume $x\in \kr L_{\bR}$. Then $x^T=(x_1^T,\ldots,x_k^T)$ for vectors $x_1,\ldots,x_k\in \BF^n$ and we have
\[
\sum_{j=1}^k R_j x_j=0\] and \[ \sum_{j=1}^k R_j C^*_i  x_j=0\mbox{ for $i=1,\ldots,m$}.
\]
Since $AC^*_1 ,\ldots,AC^*_m,B\in \tu{span}\{C^*_1,\cdots,C^*_m,I_n\}$, we can write
\[
B= \upsilon I_n + \sum_{i=1}^m \eta_i C^*_i \] and \[AC^*_l=\sigma_l I_n + \sum_{i=1}^m \zeta_{il} C^*_i,
\]
for $l=1,\ldots,m$. Using these formulas it follows that
\begin{align*}
\sum_{j=1}^k R_j B x_j
&= \sum_{j=1}^k R_j \left( \upsilon x_j+ \sum_{i=1}^m \eta_i C^*_i x_j \right)\\
&= \upsilon \sum_{j=1}^k R_j x_j + \sum_{i=1}^m \eta_i \sum_{j=1}^k R_j C^*_i x_j \\&= 0
\end{align*}
and, similarly, for $l=1,\ldots,m$ we have
\begin{align*}
\sum_{j=1}^k R_j A C^*_l x_j
&= \sum_{j=1}^k R_j \left(\sigma_l x_j + \sum_{i=1}^m \zeta_{il} C^*_i x_j \right)\\
&= \sigma_l\sum_{j=1}^k R_j x_j +  \sum_{i=1}^m \zeta_{il} \sum_{j=1}^k R_j C^*_i x_j \\&= 0.
\end{align*}
From these identities if follows that $M_{\bR}x=0$. Hence $\kr L_{\bR}\subset \kr M_{\bR}$, as claimed.

In case $m=m_\tu{max}=\dim \{A\}_\BF''$, since $C^*_1,\ldots,C^*_m$ are linearly independent, it follows that $\tu{span}\{C^*_1,\ldots,C^*_m\}=\{A\}_\BF''$, and since the algebra $\{A\}_\BF''$ is unital, nothing is added by adding $I_n$ to the span.
\end{proof}

Finally, we show how existence of the matrix $\whatS$ satisfying \eqref{SLR=MR} leads to a solution to our interpolation problem. It is only here that we have to restrict to the case where $\BF=\BR$. The result uses state space realization theory for functions in $\PRO$, which is well known, cf., \cite{ZDG96}.

\begin{lemma}\label{L:ToReal}
Let $A,B\in\BR^{n \times n}$ with $A$ Lyapunov regular and  $B \in \mathcal{C}_{\cL}(A)\bigcap \{A\}_{\BR}''$. For $R\in\BF^{n \times n}$, define $L_R$ and $M_R$ as in \eqref{LRMR}. Assume that there exists a matrix $S\in\BR^{(1+m)\times(1+m)}$ so that
\begin{align}\begin{aligned}\label{Sprops}
(S\otimes I_n) L_R&=M_R, \ \  R\in\BR^{n\times n},\\ \mbox{ and} \quad S^T&=-S.
\end{aligned}\end{align}
Decompose $S$ as
\[
S=\mat{0&\ell\\-\ell^T & -M},
\] for $M=-M^T\in \BR^{m\times m}$ and $\ell\in\BR^{1 \times m}$. Set
\begin{equation}\label{fform}
f(z)=\ell(zI_m-M)^{-1}\ell^T.
\end{equation}
Then $f\in\PRO$ and $f(A)=B$.
\end{lemma}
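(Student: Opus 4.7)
The plan is to split the proof into two parts: (a) showing $f\in\PRO$, and (b) identifying $f(A)$ with $B$ using the hypothesis on $S$.

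For (a), $f$ is real rational since $\ell$ and $M$ have real entries. Transposing the scalar expression $f(z)$ and using $M^T=-M$ gives
\[
f(z)=\ell(zI_m-M)^{-1}\ell^T=\ell(zI_m-M^T)^{-1}\ell^T=\ell(zI_m+M)^{-1}\ell^T;
\]
substituting $-z$ for $z$ in the first expression and comparing with the last yields $f(-z)=-f(z)$, so $f$ is odd. For positive realness, I would invoke the positive real (KYP) lemma applied to the realization $(M,\ell^T,\ell,0)$ with positive-definite Lyapunov certificate $P=I_m$: one verifies $M^TP+PM=0$, $P\ell^T-\ell^T=0$, and the feedthrough satisfies $D+D^T=0$, so $f$ is (lossless) positive real; cf.\ \cite{ZDG96}.

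For (b), I would unpack $(S\otimes I_n)L_R=M_R$ using the $(1,m)$ block decomposition of $S$ together with the formulas in \eqref{LRMR}. Setting $D_i:=\sum_{j=1}^m P_{ij}C_j^*\in\BR^{n\times n}$ (the $C_k^*$ coming from the ordering in $\bC$) and $\fD:=\mat{D_1\\ \vdots\\ D_m}\in\BR^{mn\times n}$, the first block-row of $(S\otimes I_n)L_R=M_R$ reduces to $(\ell P\otimes R)\bC=RB$ for every $R$, which is equivalent to $B=(\ell\otimes I_n)\fD$. The second block-row reduces, for each $i$, to $R(\ell_i I_n+\sum_k M_{ik}D_k)=RAD_i$ for all $R\in\BR^{n\times n}$; specialising to $R=I_n$ yields $AD_i-\sum_k M_{ik}D_k=\ell_i I_n$, which in stacked form reads
\begin{equation*}
(I_m\otimes A-M\otimes I_n)\fD=\ell^T\otimes I_n.
\end{equation*}

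To close, I would observe that $M$ is real skew-symmetric, so $\sigma(M)\subset\imag\BR$, while Lyapunov regularity of $A$ forces $\sigma(A)\cap\imag\BR=\emptyset$. Hence $\sigma(A)\cap\sigma(M)=\emptyset$, the operator $I_m\otimes A-M\otimes I_n$ is invertible (its eigenvalues being $\la-\mu$ with $\la\in\sigma(A)$ and $\mu\in\sigma(M)$), and $\fD$ is the unique solution of the Sylvester system above. On the other hand, setting $v(z):=(zI_m-M)^{-1}\ell^T$, the vector identity $(zI_m-M)v(z)=\ell^T$ yields the scalar rational-function identities $zv_i(z)-\sum_k M_{ik}v_k(z)=\ell_i$; since $\sigma(A)$ avoids the poles of each $v_i$ (which lie in $\sigma(M)$), the standard rational functional calculus lets us substitute $A$ for $z$ to obtain $Av_i(A)-\sum_k M_{ik}v_k(A)=\ell_i I_n$. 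Thus $(v_1(A),\ldots,v_m(A))$ solves the same Sylvester system, so uniqueness forces $D_i=v_i(A)$ and
\[
B=(\ell\otimes I_n)\fD=\sum_{i=1}^m\ell_iv_i(A)=f(A),
\]
as required. The most delicate point will be the componentwise substitution $z\mapsto A$ in the scalar rational identities, but this is routine once one notes that no $v_i$ has poles in $\sigma(A)$.
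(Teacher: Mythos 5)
Your proposal is correct and follows essentially the same route as the paper: extract the two block-row identities from $(S\otimes I_n)L_R=M_R$, note that $I_m\otimes A-M\otimes I_n$ is invertible because $\sigma(A)\cap\imag\BR=\emptyset$ while $\sigma(M)\subset\imag\BR$, and conclude $B=(\ell\otimes I_n)(I_m\otimes A-M\otimes I_n)^{-1}(\ell^T\otimes I_n)=f(A)$. The only cosmetic differences are that the paper solves the Kronecker system directly rather than passing through the component functions $v_i$ and a uniqueness argument, and that it cites realization theory for $f\in\PRO$ where you verify the positive real lemma with $P=I_m$ explicitly.
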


\begin{proof}[\bf Proof]
Let $S\in\BR^{(1+m)\times(1+m)}$ be so that \eqref{Sprops} holds. Decompose $S$ as
\[
S=\mat{0&\ell\\-\ell^T & -M},
\] for $M=-M^T\in \BR^{m\times m}$ and $\ell\in\BR^{1 \times m}.$
Since \eqref{Sprops} holds for all $R \in \BR^{n \times n}$, it follows that
\begin{align*} 
&\left( \begin{bmatrix} 0 & \ell \\ -\ell^T & -M \end{bmatrix} \otimes I_n\right)\begin{bmatrix}
    I_n \\ \left(P \otimes I_n\right)\mathbf{C}
    \end{bmatrix}=\\
&\qquad=  \widehat{S}\begin{bmatrix}
    I_n \\ \left(P \otimes I_n\right)\mathbf{C}
    \end{bmatrix}=\begin{bmatrix}
    B \\ -\left(P \otimes A\right)\mathbf{C} \\
    \end{bmatrix}.
    \end{align*}
From this identity we get
\begin{align} \label{eq1} \left(\ell \otimes I_n\right)\left(P \otimes I_n\right)\mathbf{C}=B
\end{align}
and
\begin{align*}
&-\left(\ell^T \otimes I_n\right) - \left(M \otimes I_n\right)\left(P\otimes I_n\right)\mathbf{C}\\&\qquad \qquad=-\left(P\otimes A\right)\mathbf{C}\\
&\qquad\qquad=-\left(I_m\otimes A\right)\left(P\otimes I_n\right)\mathbf{C}
\end{align*}
so that
\begin{align*}\left(\ell^T \otimes I_n\right) =\left(I_{m}\otimes A-M\otimes I_n\right)\left(P\otimes I_n\right)\mathbf{C} \end{align*} and thus \begin{align} \label{vgl met A en B} \begin{aligned} &\left(I_{m}\otimes A-M\otimes I_n\right)^{-1}\left(\ell^T \otimes I_n\right) \\&\qquad \qquad \qquad \qquad \qquad \quad=\left(P\otimes I_n\right)\mathbf{C}.\end{aligned} \end{align} To see that $I_{m}\otimes A-M\otimes I_n$ is a non-singular matrix, recall that $A$ is Lyapunov regular, which implies that $A$ has no eigenvalues on $\imag\BR$, and since $M$ is a skew-symmetric matrix its entire spectrum lies on $\imag\BR$. By Theorem 4.4.5 in \cite{HJ91} we know that $I_{m}\otimes A+M^T\otimes I_n$ has its eigenvalues as all possible pairwise sums of the eigenvalues of $A$ and $M^T,$ where $M^T=-M$ since it is also skew-symmetric. Hence $0$ can never be an eigenvalue of $I_{m}\otimes A-M\otimes I_n$. Plugging \eqref{vgl met A en B} into \eqref{eq1} gives \[B=\left(\ell \otimes I_n\right)\left(I_{m}\otimes A-M\otimes I_n\right)^{-1}\left(\ell^T \otimes I_n\right).\] Now define $f$ to be the rational function given by \[f(z)=\ell\left(zI_m-M\right)^{-1}\ell^T.\] Then \begin{align*} f(A)&= \left(\ell \otimes I_n\right)\left(I_{m}\otimes A-M\otimes I_n\right)^{-1}\left(\ell^T \otimes I_n\right)\\&=B.
\end{align*}
It follows from basic realization theory that $f$ given by \eqref{fform} with $M$ skew-Hermitian is in $\PRO$, cf., \cite[Section 2]{tHN21} and references given there.
\end{proof}

{\bf Proof of Theorem \ref{T:Main}.} The proof of Theorem \ref{T:Main} now follows by simply putting the pieces together. We had already argued that it remains to prove the implication (iv) $\Rightarrow$ (i). So take a minimal Hill representation of $\cL_{A,B}$ and assume the Hill-Pick matrix $\BH_{A,B}$ is positive definite. In particular, $m=m_\tu{max}$. Then Lemma \ref{L:subopt-suf} shows that the conditions required in Proposition \ref{P:DouglasSkew} are satisfied, so that we obtain a matrix $S$ satisfying \eqref{SLR=MR}. From this in turn a function $f$ in $\PRO$ satisfying $f(A)=B$ can obtained via Lemma \ref{L:ToReal}, establishing item (i) of Theorem \ref{T:Main}, as desired. \qed

\section{Conclusion}

In this paper we prove a Nevanlinna-Pick type interpolation theorem in the class of positive real odd functions ($\PRO$) where the interpolation point is a real matrix, partially resolving a conjecture of Cohen and Lewkowicz from \cite{CL09}. The result is proved under an additional condition referred to as ``suboptimal,'' since it corresponds to the suboptimal case in the classical Nevanlinna-Pick setting. We do not only obtain a necessary and sufficient condition for existence of the solution, but also provide an explicit way to construct a solution.

\section*{Acknowledgments}
This work is based on research supported in part by the National Research Foundation of South Africa (NRF) and the DSI-NRF Centre of Excellence in Mathematical and Statistical Sciences (CoE-MaSS). Any opinion, finding and conclusion or recommendation expressed in this material is that of the authors and the NRF and CoE-MaSS do not accept any liability in this regard.


\end{multicols}

\end{document}